\documentclass[10pt]{article}

\usepackage[a4paper,
            bindingoffset=0.2in,
            left=0.8in,
            right=0.8in,
            top=0.8in,
            bottom=0.8in,
            footskip=.25in]{geometry}
\usepackage{xcolor,placeins}
\usepackage[ruled,vlined]{algorithm2e}
\usepackage{tikz}

\usetikzlibrary{calc,trees,positioning,arrows,chains,shapes.geometric,  decorations.pathreplacing,decorations.pathmorphing,shapes,  matrix,shapes.symbols}
\usepackage{tikz-cd}
\tikzcdset{scale cd/.style={every label/.append style={scale=#1},
    cells={nodes={scale=#1}}}}

\usepackage{hyperref} 
\usepackage{multirow,multicol}

\usepackage{latexsym, bm, amsmath, amssymb, graphics, amsthm,
enumerate} 
\usetikzlibrary{arrows.meta,
                positioning,
                shapes}
\newtheorem{theorem}{Theorem}

\newtheorem{proposition}{Proposition}

\newtheorem{corollary}{Corollary}

\newtheorem{Def}{Definition}

\newtheorem{Ex}{Example}

\newcommand{\X}{X}

\renewcommand{\Re}{\ensuremath{\mathbb{R}}}

\usepackage{comment}

\begin{document}

\title{A Conditional Probability Hierarchy for Stochastic Choice\footnote{We thank Samson Abramsky, Victor Aguiar, James Bergin, Inacio Bo, Stefan Bucher, Emiliano Catonini, Vered Kurtz-David, Igor Kopylov, Kenway Louie, Jay Lu, Geralt Tserenjigmid, and Stuart Zoble for valuable input, and Kai Steverson for early inspiration for this project.  The editor and referees provided very important suggestions for improvement.  We are especially indebted to a referee for suggesting the definition of and characterization by Conditional Consistency.  We also thank participants in workshops at University of Macau, the Harbin Institute of Technology (Shenzhen), NYU Shanghai, the 2025 Barcelona School of Economics Summer Forum, the Society for the Advancement of Economic Theory 2025 Conference (SAET 2025), the 14th International Symposium on Imprecise Probabilities: Theories and Applications (ISIPTA 2025), and the 13th World Congress of the Econometric Society (ESWC 2025) for comments.  Brandenburger acknowledges financial support from NYU Stern School of Business, NYU Shanghai, and J.P. Valles.  Yang acknowledges financial support from National Natural Science Foundation of China (NSFC), China, Award No.~72403259.}}

\author
{Erya Yang \footnote{Lingnan College, Laboratory of Mezzoeconomics and Regional Industrial Coordinated Development, Shenzhen Institute of Economics of Lingnan College, Sun Yat-sen University, China; yangery@mail.sysu.edu.cn; corresponding author}
\and
{Adam Brandenburger \footnote{Stern School of Business, Tandon School of Engineering, NYU Shanghai, New York University, New York, NY 10012, U.S.A., adam.brandenburger@stern.nyu.edu}}
    }
\date{July 19, 2026}
\maketitle

\thispagestyle{empty}

\begin{abstract}
We introduce point conditional probability spaces (PCPSs) as primitive building blocks for stochastic choice.  This concept goes back to R\'enyi (1955), who proposed conditional probability spaces (CPSs) as a basis for probability theory.  Luce (1959) noted the connection between CPSs and stochastic choice, and Cerreia-Vioglio et al.~(2021) have developed the connection further.  A PCPS is a CPS each of whose component probability measures concentrates on a singleton selection.  We build a four-level PCPS-based hierarchy of families of stochastic choice rules.  Level 1 consists of PCPSs, Level 2 is made up of ``conditionally consistent" mixtures of PCPSs, Level 3 comprises all probabilistic mixtures of PCPSs, and Level 4 consists of all signed mixtures of PCPSs.  We construct our hierarchy at a general measure-theoretic level that encompasses infinite choice sets.  We also connect each level of our hierarchy to well-known axioms for stochastic choice, namely, the Weak Axiom of Stochastic Revealed Preference, Independence of Irrelevant Alternatives, and no Dutch Book.  We establish the relationship between total orders and PCPSs and demonstrate a sense in which PCPSs can be a more parsimonious representation of choice.
\end{abstract}

\section{Introduction}
The field of stochastic choice has deep roots in psychology (Luce, 1959), discrete choice (McFadden, 1974), and behavioral decision theory (Gabaix, 2019).  The starting point of theoretical development in this field is the classic Independence from Irrelevant Alternatives Axiom (IIA) due to Luce (1959).  But, very quickly, the empirical validity of this axiom was called into question (Debreu, 1960) and a vast and rich literature has since been built on the basis of alternative axioms and choice rules.  Strzalecki (2022) and Davis-Stober et al.~(2023) are recent surveys of the theoretical and empirical literature.

In this paper, we go back to the beginning and IIA, and we make this the starting point for a new architecture of stochastic choice.  Our starting observation is that a stochastic choice rule satisfying IIA is formally equivalent to the probability-theory concept of conditional probability space (CPS) due to R\'enyi (1955, 1956).  This equivalence was briefly noted by Luce (1959) for strictly positive choice probabilities and has been generalized by Cerreia-Vioglio et al.~(2021).  A CPS is a family of probability measures indexed by a family of observable events.  The key condition on a CPS is a chain rule, which disciplines the way probabilities are updated on nested sequences of events.  Clearly, a CPS can also carry an interpretation as a stochastic choice rule.  The observable events become choice sets and the associated probabilities become choice probabilities.

To build our architecture, we start with what we call a point CPS (PCPS), which is a CPS based on the selection of a single point for each conditioning set (i.e., each component measure is Dirac).  This is the bottom level of our hierarchy.  Level 2 of our hierarchy is made up of mixtures of PCPSs that satisfy a Conditional Consistency axiom, which is an analog, for the mixing operation, to the chain rule for CPSs.  Level 3 of our hierarchy comprises all probabilistic mixtures of PCPSs, and Level 4 consists of all signed mixtures of PCPSs.  Figure \ref{fig1} summarizes our PCPS-based hierarchy.
\vspace{0.1in}

\begin{figure}[h]
\centering
\begin{tikzpicture}[
levelbox/.style={
draw,
rounded corners,
align=center,
inner sep=6pt,
minimum height=1.35cm,
text width=3.0cm
},
arrow/.style={
-{Stealth[length=2.2mm]},
thick
},
node distance=0.65cm
]

\node[levelbox] (L1) {
\textbf{Level 1}\\
Point Conditional\\
Probability Spaces\\
(PCPSs)
};

\node[levelbox, right=of L1] (L2) {
\textbf{Level 2}\\
Conditionally-Consistent\\
Mixtures of PCPSs
};

\node[levelbox, right=of L2] (L3) {
\textbf{Level 3}\\
Probabilistic Mixtures\\
of PCPSs
};

\node[levelbox, right=of L3] (L4) {
\textbf{Level 4}\\
Signed \\
Mixtures of PCPSs
};

\draw[arrow] (L1) -- (L2);
\draw[arrow] (L2) -- (L3);
\draw[arrow] (L3) -- (L4);

\draw[arrow] (L1) -- node[above] {$\subsetneq$} (L2);
\draw[arrow] (L2) -- node[above] {$\subsetneq$} (L3);
\draw[arrow] (L3) -- node[above] {$\subsetneq$} (L4);

\end{tikzpicture}

\caption{A Conditional Probability Hierarchy}
\label{fig1}
\end{figure}

We build our hierarchy at a general measure-theoretic level, where the underlying set of elementary choice objects is a measurable space and the sub-family of choice sets is an arbitrary family of nonempty measurable sets (for some results) or is closed under nonempty finite intersections (for other results).  No further closure conditions are needed, which means that we can represent very general patterns of behavior.  Hildebrand (1971) is an early study of stochastic choice on infinite sets in the setting of general equilibrium theory with random preferences.  McFadden and Richter (1971, 1991) and McFadden (2005, Theorem 5.3) obtain a characterization of random utility in the case where the underlying space is Polish and the choice sets are compact.  Clark (1996) is a pioneering general characterization result for random utility based on the de Finetti no Dutch book condition (de Finetti, 1937, 1974).  (One of our results is closely inspired by this work.)  Cohen (1980) proves an equivalence between random choice and random utility in the finitary case where the set of elementary choice objects is infinite while each choice set is finite.  Our architecture is intrinsically infinite in that we allow choice sets to be infinite.

Each level of our hierarchy can be related to existing axioms or analyses of stochastic choice.  For Level 1, we prove that, if the conditioning family is closed under nonempty finite intersections, the PCPS concept is equivalent to point stochastic choice rules satisfying the Weak Axiom of Stochastic Revealed Preference (Bandyopadhyay, Dasgupta, and Pattanaik, 1999, 2002; Dasgupta and Pattanaik, 2007).  For Level 2, we show that, for an arbitrary conditioning family, a probabilistic mixture of PCPSs satisfies our Conditional Consistency axiom if and only if it comprises a CPS.  Moreover, if the conditioning family is closed under nonempty finite intersections, a CPS is equivalent to a stochastic choice rule that satisfies a Generalized Independence of Irrelevant Alternatives.  For Level 3, we prove that, for an arbitrary conditioning family, the family of (all) probabilistic mixtures of PCPSs coincides with the family of stochastic choice rules that satisfies a de Finetti-like no Dutch book condition (de Finetti, 1937, 1974).  Finally, for Level 4, if the underlying set of choices is finite and the conditioning family consists of all nonempty subsets, it follows from Dogan and Yildiz  (2023, Theorem 2) that this level imposes no restriction on a stochastic choice rule -- so that it is the top of our hierarchy.  A characterization of Level 4 in the general (infinite) case appears to be open.
 
A natural question concerns the relationship between our hierarchy and one built on total (linear) orders over choices.  There is a obvious map from total orders to PCPSs that selects the point in each choice set that is top-ranked under the total order.  (The chain rule is easily verified.)  This said, a standard intransitivity example shows that, under our closure conditions on the family of choice sets, this map may not be surjective.  If we add closure under finite unions, we obtain a surjection but not necessarily an injection.  If the family of choice sets is the finitary one, then we obtain a bijection.  Taken together, these results indicate that working with PCPSs is not just working with total orders ``in disguise" -- PCPSs are a genuinely new building block for choice.  Moreover, the fact that the natural map from total orders to PCPSs can be made surjective (given closure under finite unions) but not necessarily bijective says that there is a parsimony in working with PCPSs.  At least, there is a parsimony if we first specify the family of choice sets. Then, a total order may contain redundant information about counterfactual behavior (as reflected by surjectivity).  Of course, total orders are natural if choice sets are not pre-specified.  We also repeat that, at a broader level, our goal in this paper is to articulate the connection (going back to Luce, 1959) between stochastic choice rules and CPSs.  For this reason, the paper is couched in terms of PCPSs and notions of mixing over PCPSs.

The organization of the rest of the paper is as follows.  Section~\ref{sec:2} introduces stochastic choice rules and CPSs in our setting, and it establishes the relationship to the Weak Axiom of Stochastic Revealed Preference.  Section~\ref{sec:3} defines Level 2 of our hierarchy in terms of Conditionally-Consistent mixtures of PCPSs and proves results about this case, including the connection to Generalized Independence of Irrelevant Alternatives.  Section~\ref{sec:4} treats all probabilistic mixtures of PCPSs and proves the equivalence to stochastic choice rules satisfying no Dutch book.  Section~\ref{sec:5} defines Level 4 of our hierarchy as signed mixtures of PCPSs, which constitutes the top of our hierarchy in the finite case.  Section~\ref{sec:6} establishes the surjection and bijection results for total orders and PCPSs.  Section~\ref{sec:7} notes the strict nesting of the levels of our hierarchy, points out an illuminating non-commutativity with CPSs, discusses some literature that adds finer detail relative to our hierarchy, and suggests some future directions.  Proofs of most results are in the main text, while a longer proof and a set of examples can be found in two appendices.

\section{Stochastic Choice Rules and CPSs} \label{sec:2}
In this section, we define the basic objects with which we work throughout.  Given a measurable space $(\Omega, \mathcal{F})$, we write $\Delta(\Omega)$ for the set of all probability measures on $(\Omega, \mathcal{F})$.  Let $\cal G$ be a sub-family of $\cal F$. We will always assume that $\emptyset \notin \cal G$.

\begin{Def} \label{SCR}
A \textbf{stochastic choice rule} (relative to $\cal G$) is a map $c : {\cal G} \rightarrow \Delta(\Omega)$, which we will also write as $G \mapsto c_G(\cdot)$ for $G \in {\cal G}$, satisfying:
\begin{enumerate}
\item $c_G(G) = 1$ for every $G \in \cal G$.
\end{enumerate} 
\end{Def}

\begin{Def} \label{CPS}
A \textbf{conditional probability space} (\textbf{CPS}) (relative to $\cal G$) is a map $p : {\cal G} \rightarrow \Delta(\Omega)$, which we will also write as $G \mapsto p_G(\cdot)$ for $G \in {\cal G}$, satisfying:
\begin{enumerate}
\item $p_G(G) = 1$ for every $G \in \cal G$;
\item $p_G(E) = p_G(F) \times p_F(E)$ for every $E \subseteq F \subseteq G$ with $E \in \cal F$ and $F, G \in \cal G$.
\end{enumerate}
\end{Def}

We see that a CPS is a stochastic choice rule satisfying an extra chain rule requirement, namely, Condition 2 of Definition \ref{CPS}.  (In R\'enyi, 1955, Condition 2 takes the form: If $E, F \in \cal F$, $G \in \cal G$, and $F \cap G \in \cal G$, then $p_G(E \cap F) = p_G(F) \times p_{F \cap G}(E)$. This is readily seen to be equivalent to our Condition 2.)

We now define the cases of point stochastic choice rules and point CPSs.  Let $s: {\cal G} \rightarrow \Omega$ be a selection on $\cal G$, that is, for each $G \in \cal G$, $s(G) \in G$.

\begin{Def} \label{PSCR}
Fix a stochastic choice rule $c: {\cal G} \to \Delta(\Omega)$. Suppose there is a selection $s$ such that, for each $G \in \cal G$ and $F \in \cal F$: 
\begin{equation}~\label{eq7}
c_G(F)=\mathbf{1}_F(s(G)).
\end{equation}
That is, each $c_G(\cdot)$ is the Dirac measure concentrated on $s(G)$.  We call such a stochastic choice rule a \textbf{point stochastic choice rule}.
\end{Def}

\begin{Def} \label{PCPS}
Fix a CPS $p : {\cal G} \rightarrow \Delta(\Omega)$. Suppose there is a selection $s$ such that, for each $G \in \cal G$ and $F \in \cal F$:
\begin{equation}~\label{eq6}
p_G(F) = \mathbf{1}_F(s(G)).
\end{equation}
That is, each $p_G(\cdot)$ is the Dirac measure concentrated on $s(G)$. We call such a CPS a \textbf{point CPS} (\textbf{PCPS}). 
\end{Def}

We next establish a characterization of Level 1 of our conditional probability hierarchy.  We make use of a stochastic analog to the Weak Axiom of Revealed Preference (WARP) of classic demand theory, introduced by Bandyopadhyay, Dasgupta, and Pattanaik (1999, 2002) and Dasgupta and Pattanaik (2007).

\begin{Def} \label{WASRP}
A stochastic choice rule $G \mapsto c_G(\cdot)$ satisfies the \textbf{Weak Axiom of Stochastic Revealed Preference} (\textbf{WASRP}) if for every $G, H \in \cal G$ and every $F \in \cal F$ with $F \subseteq G \cap H$:
\begin{equation}~\label{eq8}
c_H(F) - c_G(F) \leq c_G(G\backslash H).
\end{equation}
\end{Def}

To explain WASRP, observe that the axiom does not require equality of the choice probabilities $c_H(F)$ and $c_G(F)$, which might seem the obvious extension of WARP.  Instead, WASRP limits how big the difference in probabilities can be, which must lie in the range $[-c_H(H\backslash G), +c_G(G\backslash H)]$.  Bandyopadhyay, Dasgupta, and Pattanaik (1999) and Dasgupta and Pattanaik (2007) provide the following argument for the upper bound.  When the choice set changes from $G$ to $H$, this rules out ``competing" choices in $G\backslash H$ to the alternatives in $F$.  This effect may raise the probability $c_H(F)$ that the chosen alternative lies in $F$.  But this increase, viz.~the difference $c_H(F) - c_G(F)$, should be bounded above by the original choice probability for $G\backslash H$, viz.~$c_G(G\backslash H)$.  The argument for the lower bound is analogous.

Call a sub-family $\cal G$ of $\cal F$ a nonempty $\pi$-system on $\Omega$ if the family $\cal G \cup \{\emptyset\}$ is a $\pi$-system.  Equivalently, if $G, H \in \cal G$ with $G \cap H \not= \emptyset$, then $G \cap H \in \cal G$.

\begin{theorem} \label{thm-level-1}
Fix a measurable space $(\Omega, \cal F)$ and a nonempty $\pi$-system $\cal G$ of choice sets.  The family of PCPSs $G \mapsto p_G(\cdot)$ coincides with the family of point stochastic choice rules $G \mapsto c_G(\cdot)$ satisfying WASRP.
\end{theorem}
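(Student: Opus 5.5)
We prove the two inclusions separately. Throughout we use the fact that a point rule is determined by its selection $s$: for every $F \in \cal F$ we have $c_G(F) = \mathbf{1}_F(s(G))$. So both the chain rule and WASRP reduce to statements about membership of the points $s(G)$ in various sets. Each such statement compares two indicator values in $\{0,1\}$.

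\emph{PCPS $\Rightarrow$ WASRP.} Let $p$ be a PCPS with selection $s$. Fix $G, H \in \cal G$ and $F \in \cal F$ with $F \subseteq G \cap H$.

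The left side $p_H(F) - p_G(F)$ is at most $1$, and it is positive only if $s(H) \in F$ and $s(G) \notin F$. So it suffices to show that in this situation $s(G) \in G \backslash H$, i.e.\ $s(G) \notin H$.

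Suppose instead that $s(G) \in H$. Then $s(G) \in G \cap H$, which is nonempty, so $K := G \cap H \in \cal G$ by the $\pi$-system assumption.

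Apply the chain rule with $F \subseteq K \subseteq G$ to get $p_G(F) = p_G(K)\, p_K(F)$. Here $p_G(K) = 1$ and $p_G(F) = 0$, so $s(K) \notin F$.

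Apply the chain rule with $F \subseteq K \subseteq H$ to get $p_H(F) = p_H(K)\, p_K(F)$. Since $s(H) \in F \subseteq K$, we have $p_H(K)=1$ and $p_H(F)=1$, so $s(K) \in F$.

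These two conclusions contradict each other, so $s(G) \notin H$ and \eqref{eq8} holds. This is the only place where the $\pi$-system assumption is needed, and it is the step that needs some care.

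\emph{WASRP $\Rightarrow$ PCPS.} Let $c$ be a point rule with selection $s$ satisfying WASRP. We must verify Condition 2 of Definition \ref{CPS}: take $E \subseteq F \subseteq G$ with $E \in \cal F$ and $F, G \in \cal G$. We split into two cases.

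Case 1: $s(G) \notin F$. Then $s(G)\notin E$ as well, so both sides of the chain rule are $0$.

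Case 2: $s(G) \in F$. The right side equals $c_F(E)$, so we must show that $s(G) \in E$ if and only if $s(F) \in E$. Apply WASRP twice, with the pair $(G,H)$ in \eqref{eq8} instantiated as follows.

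First take $G$ in the role of $G$ and $F$ in the role of $H$, with the set $E \subseteq G \cap F = F$. This gives $c_F(E) - c_G(E) \le c_G(G\backslash F) = 0$, the last equality because $s(G) \in F$. Hence $s(F) \in E$ implies $s(G) \in E$.

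Next take $F$ in the role of $G$ and $G$ in the role of $H$, with the same set $E$. This gives $c_G(E) - c_F(E) \le c_F(F \backslash G) = 0$, since $F \backslash G = \emptyset$. Hence $s(G) \in E$ implies $s(F) \in E$.

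Together these give the required equivalence, so $c$ satisfies the chain rule and is a PCPS. This direction needs no closure assumption on $\cal G$.
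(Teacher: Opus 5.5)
Your proof is correct and follows essentially the same route as the paper. In the forward direction you derive the same contradiction, $p_{G\cap H}(F)=0$ versus $p_{G\cap H}(F)=1$, from the two chain-rule instances through $K=G\cap H$; in the reverse direction you use the same two instances of WASRP, with the roles of $G$ and $F$ swapped, that drive the paper's case analysis (the paper's ``or'' between them should really be ``and'', as you implicitly use).
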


\begin{proof}
Fix a PCPS $p$ relative to $\cal G$.  We need to show that $p$, viewed as a stochastic choice rule, is a point stochastic choice rule and satisfies WASRP.  It is immediate that $p$ is a point stochastic choice rule, so we show that WASRP is satisfied.  Fix $G, H \in \cal G$ and $F \in \cal F$ with $F \subseteq G \cap H$.  Case (i): $s(G) \in G \backslash H$.  Then $p_G(G \backslash H) = 1$, from which Inequality~\ref{eq8} is satisfied.  Case (ii): $s(G) \in F$.  Then $p_G(F) = 1$, from which Inequality~\ref{eq8} is again satisfied.  Case (iii): $s(G) \in (G \cap H) \backslash F$.  Then $p_G(F) = 0$ and $p_G(G \backslash H) = 0$, so, in light of Inequality~\ref{eq8}, we need to show that $p_H(F) = 0$.  Suppose not, so that $s(H) \in F$ and $p_H(F) = 1$.  Then $p_H(G \cap H) = 1$.  Since $\cal G$ is a nonempty $\pi$-system, we have $G \cap H \in \cal G$.  The chain rule then gives $p_H(F) = p_H(G \cap H) \times p_{G \cap H}(F)$, from which we find $p_{G \cap H}(F) = 1$.  Using $p_G(F) = p_G(G \cap H) \times p_{G \cap H}(F)$, we find $p_G(F) = 0$ (as already noted), and $p_G(G \cap H) = 1$ (since $s(G) \in G \cap H$), from which $p_{G \cap H}(F) = 0$, a contradiction.  This establishes the forward direction of the proof.

For the reverse direction, fix a stochastic choice rule $c$ relative to $\cal G$ satisfying WASRP.  We need to show that the chain rule holds: $c_G(E) = c_G(F) \times c_F(E)$ for every $E \subseteq F \subseteq G$ with $E \in \cal F$ and $F, G \in \cal G$.  Write WASRP in the form:
\begin{equation}~\label{eq9}
c_G(E) - c_F(E) \leq c_F(F\backslash G) \,\, \text{or} \,\, c_F(E) - c_G(E) \leq c_G(G\backslash F).
\end{equation}
Case (i): $s(G) \in E$ and $s(F) \in E$.  Then $c_G(E) = 1$, and therefore $c_G(F) = 1$, and also $c_F(E) = 1$, so that the chain rule is satisfied.  Case (ii): $s(G) \in E$ and $s(F) \notin E$.  Then $c_G(E) = 1$ and $c_F(E) = 0$.  But from the first case of Inequality~\ref{eq9}, using $c_F(F \backslash G) = c_F(\emptyset) = 0$, we then get $1 - 0 \leq 0$, a contradiction.  Case (iii): $s(G) \notin F$.  Then $c_G(E) = 0$ and $c_G(F) = 0$, so that the chain rule is again satisfied.   Case (iv): $s(G) \in F \backslash E$.  Then $c_G(E) = 0$ and $c_G(F) = 1$, and $c_G(G \backslash F) = 0$.  Now apply the second case of Inequality~\ref{eq9}.  Since $c_G(E) = 0$ and $c_G(G \backslash F) = 0$, we get $c_F(E) = 0$, and the chain rule holds once more, completing the proof.
\end{proof}

\section{Conditionally-Consistent Mixtures of PCPSs} \label{sec:3}
To define Level 2 of our conditional probability hierarchy, we build a measurable space of PCPSs.  Fix a measurable space $(\Omega, \cal F)$ and a family $\cal G \subseteq \cal F \backslash \{\emptyset\}$ of nonempty measurable sets.  Let $\Pi$ be the set of all PCPSs $p$ relative to $\cal G$.  For $E \in \cal F$ and $G \in \cal G$, define the PCPS cylinder set:
\begin{equation}
[E, G]_\Pi = \{ p \in \Pi : p_G(E) = 1 \},
\end{equation}
and let ${\cal C}_\Pi$ be the collection of cylinders:
\begin{equation}
{\cal C}_\Pi =\bigl\{ [E, G]_\Pi : E \in {\cal F} \,\, \text{and} \,\, G \in \cal G \bigr\}.
\end{equation}
The field of PCPSs, which we denote by ${\cal D}_\Pi$, is the smallest field of sets in $\cal F$ containing ${\cal C}_\Pi$.

\begin{Def} \label{pi-realizable}
A stochastic choice rule $G \mapsto c_G(\cdot)$ is \textbf{realizable as a $\Pi$-based probability mixture} if there is a finitely additive probability measure $Q$ on $(\Pi, {\cal D}_\Pi)$ such that for all $E \in \cal F$ and $G \in \cal G$:
\begin{equation}~\label{eq75}
c_G(E) = Q([E, G]_\Pi).
\end{equation}
\end{Def}

We now define Level 2 of our hierarchy as $Q$-mixtures of PCPSs under the following assumption on $Q$.

\begin{Def} \label{CC}
A probability measure $Q$ on $(\Pi, \mathcal D_\Pi)$ satisfies \textbf{Conditional Consistency} if for every $E \subseteq F \subseteq G$, with $E \in \cal F$ and $F, G \in \cal G$:
\begin{equation} \label{eq-CC-product}
Q([E, G]_\Pi) = Q([F, G]_\Pi) \times Q([E, F]_\Pi).
\end{equation}
\end{Def}

Dividing by $Q([F, G]_\Pi)$ when this term is strictly positive, we see that Conditional Consistency says that, conditioning on the event that the selected element of $G$ lies in $F$, the resulting probability measure on choices agrees with the probabilities when choosing directly from $F$.  We can think of Conditional Consistency as a chain rule applied at the level of the mixing measure.  Indeed, the next result follows naturally.

\begin{theorem} \label{thm:CC}
Fix a measurable space $(\Omega, \cal F)$ and a family $\cal G$ of nonempty measurable choice sets. Let $G \mapsto c_G(\cdot)$ be a stochastic choice rule realizable as a $\Pi$-based probability mixture $Q$.  Then $Q$ satisfies Conditional Consistency if and only if $c$ satisfies the chain rule.
\end{theorem}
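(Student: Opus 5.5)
This is essentially a direct translation through the realizability identity (\ref{eq75}). Since $c$ is realizable as a $\Pi$-based probability mixture $Q$, we have $c_G(E) = Q([E,G]_\Pi)$ for every $E \in \cal F$ and $G \in \cal G$. The plan is to substitute this identity term by term into the Conditional Consistency equation (\ref{eq-CC-product}) and into the chain rule of Definition~\ref{CPS}, and to check that the two become literally the same family of equations.

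Fix $E \subseteq F \subseteq G$ with $E \in \cal F$ and $F, G \in \cal G$. Note that $F \in \cal G \subseteq \cal F$, so the cylinder $[F,G]_\Pi$ is well defined. Applying (\ref{eq75}) three times gives
\begin{equation*}
Q([E,G]_\Pi) = c_G(E), \qquad Q([F,G]_\Pi) = c_G(F), \qquad Q([E,F]_\Pi) = c_F(E).
\end{equation*}
Therefore (\ref{eq-CC-product}) holds for the triple $(E,F,G)$ if and only if $c_G(E) = c_G(F)\times c_F(E)$. Both conditions range over exactly the same index set of triples, so $Q$ satisfies Conditional Consistency if and only if $c$ satisfies Condition 2 of Definition~\ref{CPS}.

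To conclude that $c$ is a CPS, we also need Condition 1 of Definition~\ref{CPS}. This holds automatically because $c$ is a stochastic choice rule, so $c_G(G)=1$. Equivalently, $Q([G,G]_\Pi)=Q(\Pi)=1$, since every PCPS selects a point of $G$.

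There is no real obstacle. The only care needed is to confirm that each cylinder appearing in Conditional Consistency lies in ${\cal D}_\Pi$, so that $Q$ is defined on it; this holds because each such cylinder belongs to ${\cal C}_\Pi$. No additivity of $Q$ beyond (\ref{eq75}) is used, and no closure assumptions on $\cal G$ are needed, which is consistent with the statement being made for an arbitrary family $\cal G$.
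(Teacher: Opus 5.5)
Your proposal is correct and matches the paper's proof: both substitute the realizability identity $c_G(E) = Q([E,G]_\Pi)$ into the three terms of Equation~(\ref{eq-CC-product}), so that Conditional Consistency and the chain rule become the same equation over the same triples $E \subseteq F \subseteq G$. Your remark about Condition 1 of Definition~\ref{CPS} is harmless but not needed for this theorem, which concerns only the chain rule; that remark is what gives Corollary~\ref{cor-CC}.
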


\begin{proof}
First suppose that $Q$ satisfies Conditional Consistency.  Since $c_G$ is realized by $Q$, we have $c_G(E) = Q([E, G]_\Pi)$.  Now fix $E \subseteq F \subseteq G$ with $E \in \cal F$ and $F, G \in \cal G$.  Then Equation~\ref{eq-CC-product} holds, which becomes:
\begin{equation}
c_G(E) = c_G(F) \times c_F(E),
\end{equation}
so that $c$ satisfies the chain rule.  Conversely, suppose $c$ satisfies the chain rule.  Since:
\begin{equation}
Q([E, G]_\Pi) = c_G(E), \,\, Q([F, G]_\Pi) = c_G(F), \,\, Q([E, F]_\Pi) = c_F(E), 
\end{equation}
application of the chain rule for $c$ yields Equation~\ref{eq-CC-product}, and Conditional Consistency holds.
\end{proof}

\begin{corollary} \label{cor-CC}
Fix a measurable space $(\Omega, \cal F)$ and a sub-family $\cal G$ of nonempty measurable choice sets.  The Conditionally-Consistent mixtures $Q$ are those whose realized stochastic choice rules $G \mapsto c_G(\cdot)$ are CPSs.
\end{corollary}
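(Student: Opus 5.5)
The corollary is essentially a restatement of Theorem~\ref{thm:CC}, so the plan is to read it through that theorem and check that the quantifiers match. A \emph{Conditionally-Consistent mixture} means a finitely additive probability measure $Q$ on $(\Pi, {\cal D}_\Pi)$ satisfying Definition~\ref{CC}. Its \emph{realized stochastic choice rule} is the map $G \mapsto c_G(\cdot)$ with $c_G(E) = Q([E,G]_\Pi)$ for $E \in \cal F$, $G \in \cal G$. The claim is that $Q$ is Conditionally Consistent exactly when this $c$ is a CPS.

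The first step is to verify that $c$ really is a stochastic choice rule in the sense of Definition~\ref{SCR}, so that Theorem~\ref{thm:CC} applies.
\begin{itemize}
\item[(a)] \textbf{Normalization.} For every $p \in \Pi$ we have $p_G(G)=1$, so $[G,G]_\Pi = \Pi$ and hence $c_G(G)=Q(\Pi)=1$.
\item[(b)] \textbf{Membership in $\Delta(\Omega)$.} Each $p_G$ is Dirac at $s_p(G)$. Therefore $[E,G]_\Pi = \{p : s_p(G)\in E\}$, and the map $E \mapsto [E,G]_\Pi$ preserves complements and disjoint unions. Consequently $c_G$ is a finitely additive probability on $\cal F$.
\end{itemize}
Countable additivity of $c_G$ is the one delicate point, since $Q$ is only finitely additive. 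I would handle it by noting that the corollary concerns rules that are realized in the sense of Definition~\ref{pi-realizable}, where $c$ is given as a stochastic choice rule. So I restrict to mixtures $Q$ whose realized $c_G$ lie in $\Delta(\Omega)$, which is exactly the hypothesis of Theorem~\ref{thm:CC}. Alternatively, I would read ``CPS'' for the realized rule as allowing finitely additive components; this is harmless because the chain-rule argument uses no countable additivity.

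With that settled, the remaining step is a direct application of Theorem~\ref{thm:CC}. For such a $Q$ and its realized $c$, $Q$ satisfies Conditional Consistency if and only if $c$ satisfies Condition~2 of Definition~\ref{CPS}. Condition~1 holds automatically by (a), so Condition~2 is equivalent to $c$ being a CPS. This gives both inclusions:
\begin{itemize}
\item[(i)] every Conditionally-Consistent mixture realizes a CPS;
\item[(ii)] every mixture realizing a CPS is Conditionally Consistent.
\end{itemize}
No closure assumption on $\cal G$ is needed, matching the statement. The only real obstacle is the additivity bookkeeping in step (b); the equivalence itself is immediate from the theorem.
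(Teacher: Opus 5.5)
Your proposal is correct and matches the paper. The paper states this corollary without a separate proof, as an immediate consequence of Theorem~\ref{thm:CC}: for a mixture $Q$ realizing $c$, Conditional Consistency is exactly the chain rule for $c$, and Condition~1 of Definition~\ref{CPS} holds automatically. Your extra checks ($[G,G]_\Pi = \Pi$ and finite additivity of $c_G$) are sound, and so is your remark that a merely finitely additive $Q$ need not give a countably additive $c_G$. The paper handles that point the same way as your first resolution: Definition~\ref{pi-realizable} starts from a given stochastic choice rule $c$.
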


The next question is the relationship between Level 2 of our hierarchy and stochastic choice rules that satisfy a (suitable generalization of) the usual Independence of Irrelevant Alternatives (IIA) axiom.

\begin{Def} \label{GIIA}
A stochastic choice rule $c: {\cal G} \to \Delta(\Omega)$ satisfies \textbf{Generalized Independence of Irrelevant Alternatives} (\textbf{GIIA}) if for every $G, H \in \cal G$, and every $E, F \in \cal F$ with $E, F \subseteq G \cap H$, we have:
\begin{equation} \label{eq10}
c_G(E) \times c_H(F) = c_H(E) \times c_G(F).
\end{equation}
\end{Def}

It is clear that GIIA implies the usual statement of IIA in terms of probability ratios, in the case that $c_H(E) \times c_H(F) \not= 0$.  Luce (1959) already noted the equivalence between CPSs and stochastic choice rules satisfying IIA, for the case of finite choice sets and strictly positive probabilities.  Cerreia-Vioglio et al.~(2021) extend this equivalence to the finitary case and nonnegative probabilities.  The next result is a further extension.  The forward direction also extends the proof of Theorem 6 in R\'enyi (1955). 

\begin{theorem} \label{thm-level-2}
Fix a measurable space $(\Omega, \cal F)$ and a nonempty $\pi$-system $\cal G$ of choice sets.  The family of CPSs $G \mapsto p_G(\cdot)$ coincides with the family of stochastic choice rules $G \mapsto c_G(\cdot)$ satisfying Generalized Independence of Irrelevant Alternatives (GIIA).
\end{theorem}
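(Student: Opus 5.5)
We prove the two inclusions separately. In both directions we work only with Definition~\ref{CPS} and Definition~\ref{GIIA}, using the $\pi$-system hypothesis to place $G \cap H$ in $\cal G$ whenever $G \cap H \neq \emptyset$.

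\emph{Forward direction (CPS $\Rightarrow$ GIIA).} Fix a CPS $p$, sets $G, H \in \cal G$, and $E, F \in \cal F$ with $E, F \subseteq G \cap H$.
\begin{itemize}
\item If $G \cap H = \emptyset$, then $E = F = \emptyset$, so both sides of Equation~\ref{eq10} vanish.
\item Otherwise $K := G \cap H \in \cal G$. Since $E \subseteq K \subseteq G$, the chain rule gives $p_G(E) = p_G(K)\, p_K(E)$ and $p_G(F) = p_G(K)\, p_K(F)$. The same argument with $K \subseteq H$ gives $p_H(E) = p_H(K)\, p_K(E)$ and $p_H(F) = p_H(K)\, p_K(F)$.
\item Substituting, both sides of Equation~\ref{eq10} equal $p_G(K)\, p_H(K)\, p_K(E)\, p_K(F)$. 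This is the extension of R\'enyi's Theorem 6 argument.
\end{itemize}

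\emph{Reverse direction (GIIA $\Rightarrow$ CPS).} Fix $c$ satisfying GIIA and $E \subseteq F \subseteq G$ with $E \in \cal F$ and $F, G \in \cal G$. Apply Equation~\ref{eq10} to the pair of choice sets $(G, H) := (G, F)$, noting $G \cap F = F$, and to the pair of events $(E, F)$, both contained in $F$. This yields
\[
c_G(E)\, c_F(F) = c_F(E)\, c_G(F).
\]
Since $c_F(F) = 1$ by Definition~\ref{SCR}, this is exactly the chain rule $c_G(E) = c_G(F)\, c_F(E)$.

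The reverse direction therefore does not even use the $\pi$-system hypothesis. That hypothesis is needed only in the forward direction, to guarantee $G \cap H \in \cal G$ so that the chain rule can be routed through $G \cap H$. Routing through the intersection is the one step requiring care, together with the trivial case $G \cap H = \emptyset$.
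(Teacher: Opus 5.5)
Your proposal is correct and follows the paper's proof essentially step for step. In the forward direction you factor all four terms through $G \cap H \in {\cal G}$; the paper uses R\'enyi's equivalent form of the chain rule, which gives $p_G(H)$ where you have $p_G(G \cap H)$, the same number since $p_G(G)=1$. In the reverse direction you specialize GIIA to a choice set $F \subseteq G$ with events $E, F$ and use $c_F(F)=1$, exactly as the paper does.
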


\begin{proof}
Fix a CPS $G \mapsto p_G(\cdot)$.  Also fix $G, H \in \cal G$ and $E, F \in \cal F$, where $E, F \subseteq G \cap H$.  Suppose first that $G \cap H \not= \emptyset$.  Since $\cal G$ is a nonempty $\pi$-system, we have $G \cap H \in \cal G$.  Using the equivalent R\'enyi (1955) form of Condition 2 of a CPS, we can write:
\begin{align}
p_G(E) & = p_G(E \cap H) = p_G(H) \times p_{G \cap H}(E), ~\label{eq11}\\
p_G(F) & = p_G(F \cap H) = p_G(H) \times p_{G \cap H}(F), ~\label{eq12}\\
p_H(E) & = p_H(E \cap G) = p_H(G) \times p_{G \cap H}(E), ~\label{eq13}\\
p_H(F) & = p_H(F \cap G) = p_H(G) \times p_{G \cap H}(F). ~\label{eq14}
\end{align}
Multiplying the left sides of Equations~\ref{eq11} and ~\ref{eq14}, and the left sides of Equations~\ref{eq12} and~\ref{eq13}, we obtain:
\begin{equation}~\label{15}
p_G(E) \times p_H(F) = p_H(E) \times p_G(F),
\end{equation}
as required.  If $G \cap H = \emptyset$, then $E = F = \emptyset$, and both sides of Equation~\ref{15} are $0$.  This establishes the forward direction of the proof.

For the reverse direction, suppose that Equation~\ref{15} holds and $H \subseteq G$, and set $F = H$.  We obtain
\begin{equation}~\label{eq16}
c_G(E) \times c_H(H) = c_H(E) \times c_G(H),
\end{equation}
from which, since $c_H(H) = 1$, our Condition 2 of a CPS is satisfied.
\end{proof}

\section{Probabilistic Mixtures of PCPSs} \label{sec:4}
For Level 3 of our hierarchy, we drop the assumption of Conditional Consistency and allow arbitrary probabilistic mixtures of PCPSs.  Formally, Level 3 consists of all stochastic choice rules that arise as $\Pi$-based probability mixtures per Definition~\ref{pi-realizable}.  The characterization we provide is a de Finetti-style coherence condition.

We will use the formalism of positive partial charges (Rao and Rao, 1983, Definition 3.2.2).

\begin{Def}\label{charge}
Fix an arbitrary set $\X$ and let $\cal E$ be a family of subsets of $X$.  A function $\mu : {\cal E} \rightarrow \mathbb{R}^+$ is a \textbf{positive real partial charge on} $(X, {\cal E})$ if:
\begin{equation}~\label{eq64}
\sum_{i = 1}^m {\rm {\bf 1}}_{C_i} \leq \sum_{j = 1}^n {\rm {\bf 1}}_{D_j}
\end{equation}
pointwise on $X$, for any $C_i, D_j \in \cal E$, implies:
\begin{equation}~\label{eq65}
\sum_{i = 1}^m \mu(C_i) \leq \sum_{j = 1}^n \mu(D_j).
\end{equation}
\end{Def}

\begin{Def} \label{PCPS-coherence}
A stochastic choice rule $G \mapsto c_G(\cdot)$ satisfies \textbf{PCPS-coherence} if the assignment given by
\begin{equation} \label{eq-assignment}
\mu([E, G]_\Pi) = c_G(E),
\end{equation}
is a well-defined positive real partial charge on $(\Pi, {\cal C}_\Pi)$.
\end{Def}

This definition says that: (i) if two cylinder sets satisfy $[E, G]_\Pi = [F, H]_\Pi$, then $c_G(E) = c_{H}(F)$, so that the partial charge is well-defined; and (ii) Inequality~\ref{eq64} for cylinder sets $C_i, D_j$ implies Inequality~\ref{eq65} when $\mu$ is defined by Equation~\ref{eq-assignment}.  The definition has a natural interpretation as a de Finetti-like no Dutch book condition (de Finetti, 1937, 1974).  The indicator ${\rm {\bf 1}}_{[E, G]_\Pi}$ of a cylinder set $[E, G]_\Pi$ can be viewed as a bet on $\Pi$, which pays $1$ at a PCPS $p$ if $p_G(E) = 1$ and pays $0$ otherwise.  The stochastic choice probability $c_G(E)$ is interpreted as the price or pr\'evision (in the language of de Finetti, 1937) assigned to this bet.  PCPS-coherence requires that if one finite portfolio of PCPS-cylinder bets pays no more than another finite portfolio at every PCPS in $\Pi$, then the price assigned by $c$ to the first portfolio must be less than or equal to the price assigned by $c_G(E)$ to the second portfolio.  In short, there is no Dutch book against the PCPS-cylinder bets.  Clark (1996) appears to be the pioneering paper on the connection between stochastic choice and the de Finetti no Dutch book requirement, where a characterization of random utility on infinite sets is given via this condition.  Our approach is an analog for PCPS-based stochastic choice.

An important validation of this approach is that in the case where $\cal G$ is the finitary sub-family of choice sets, our condition of PCPS-coherence reduces to the Block-Marschak condition (Block and Marschak, 1960) -- to be precise, to the finitary generalization of this condition due to Cohen (1980, Definition 2.4).  (Recall the finitary sub-family of choice sets consists of all nonempty finite subsets of $\Omega$.)  We state this formally later in this section.

We now prove our characterization of Level 3 of our hierarchy.  We will use the extension result for charges (Rao and Rao, 1983, Theorem 3.2.10).

\begin{proposition} \label{extension}
Fix an arbitrary set $X$, let $\cal E$ be a family of subsets of $X$ with $X \in \cal E$, and suppose $\mu$ is a positive real partial charge on $(X, \cal E)$ with $\mu(X) = 1$.  Then, for any field $\cal D$ containing $\cal E$, there is a finitely additive probability measure $\widetilde \mu$ on $(X, \cal D)$ that extends $\mu$.
\end{proposition}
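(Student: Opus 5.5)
This is the Rao and Rao extension theorem for partial charges. I would prove it by the standard Hahn--Banach (sandwich) route.

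First, pass from sets to simple functions. Let $L$ be the vector space of $\cal D$-simple functions on $X$. Let $M \subseteq L$ be the set of finite sums $\sum_i a_i \mathbf{1}_{C_i}$ with $C_i \in \cal E$ and $a_i$ nonnegative rationals; clearing denominators reduces these to integer combinations. Define the sublinear functional
\begin{equation*}
p(f) = \inf\Bigl\{ \sum_{j=1}^n b_j \mu(D_j) : D_j \in {\cal E},\ b_j \geq 0,\ f \leq \sum_{j=1}^n b_j \mathbf{1}_{D_j} \Bigr\}.
\end{equation*}
Because $X \in \cal E$ and $f$ is bounded, $p$ is finite from above, via $f \le \|f\|_\infty \mathbf{1}_X$. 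The functional $p$ is positively homogeneous and subadditive by construction.

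Second, obtain a lower bound on $p$ from the partial-charge inequality. For $C \in \cal E$, we have $p(\mathbf{1}_C) = \mu(C)$. The bound $\le$ is trivial. For $\ge$: whenever $\mathbf{1}_C \le \sum_j b_j \mathbf{1}_{D_j}$ with rational $b_j$, multiplying through by a common denominator gives an inequality between integer sums of indicators. Definition~\ref{charge} (Inequality~\ref{eq64} implies \ref{eq65}) then yields $\mu(C) \le \sum_j b_j\mu(D_j)$. Real coefficients are handled by approximating from above by rationals.

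Similarly, $p(-\mathbf{1}_C) \geq -\mu(C)$. This is the step I expect to be the main obstacle. If $-\mathbf{1}_C \le \sum_j b_j \mathbf{1}_{D_j}$, then $\mathbf{1}_X \le \mathbf{1}_C + \sum_j b_j \mathbf{1}_{D_j}$. The charge inequality gives $1 = \mu(X) \le \mu(C) + \sum_j b_j\mu(D_j)$, which is not directly what we want. I would avoid this issue by not needing it. Hahn--Banach only requires a linear functional $\phi_0$ on a subspace, dominated by $p$ there. I would take the subspace spanned by $\mathbf{1}_X$ with $\phi_0(t\mathbf{1}_X) = t$. Domination requires $p(\mathbf{1}_X) \ge 1$, which holds by the above, and $p(-\mathbf{1}_X) \ge -1$. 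The latter follows since $-\mathbf{1}_X \le \sum_j b_j\mathbf{1}_{D_j}$ forces $\sum_j b_j \mu(D_j) \ge 0 > -1$ (indeed $p(-\mathbf{1}_X)\ge -1$ needs only nonnegativity of $\mu$).

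Third, apply Hahn--Banach to extend $\phi_0$ to a linear $\phi$ on $L$ with $\phi \le p$. Positivity follows: if $f \ge 0$, then $-f \le 0 = 0\cdot \mathbf{1}_X$, so $p(-f) \le 0$, hence $\phi(-f)\le 0$ and $\phi(f)\ge 0$. Set $\widetilde\mu(D) = \phi(\mathbf{1}_D)$ for $D \in \cal D$. This is finitely additive by linearity, nonnegative by positivity, and satisfies $\widetilde\mu(X)=1$.

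It remains to check that $\widetilde\mu$ extends $\mu$. We have $\widetilde\mu(C) \le p(\mathbf{1}_C) = \mu(C)$. For the reverse inequality, $\widetilde\mu(C) = 1 - \widetilde\mu(X\setminus C) \ge 1 - p(\mathbf{1}_X - \mathbf{1}_C)$. So we need $p(\mathbf{1}_X - \mathbf{1}_C) \le 1 - \mu(C)$.

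This reverse inequality is the real crux, and it is where the plan must change. With only $C, X \in \cal E$ available, covering $\mathbf{1}_X - \mathbf{1}_C$ by nonnegative combinations of $\cal E$-sets need not achieve $1-\mu(C)$, so $p$ as defined is too weak. The fix is to allow signed coefficients in the definition of $p$:
\begin{equation*}
p(f) = \inf\Bigl\{ \sum_j b_j\mu(D_j) - \sum_i a_i\mu(C_i) : f \le \sum_j b_j \mathbf{1}_{D_j} - \sum_i a_i \mathbf{1}_{C_i} \Bigr\},
\end{equation*}
with all $a_i, b_j \ge 0$. The partial-charge inequality, applied with the $C_i$ moved to the left-hand side, is exactly what shows that $p$ is well defined. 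It also shows that $p(\mathbf{1}_C)=\mu(C)$ and $p(-\mathbf{1}_C) = -\mu(C)$, so that $\phi(\mathbf{1}_C)=\mu(C)$ is forced.

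Accordingly, I would start directly with this signed version of $p$. The key lemma is that $p$ is finite, i.e., never $-\infty$. This follows by applying Definition~\ref{charge} to $0 \le \ldots$ after rational approximation and scaling to integer multiplicities. All remaining steps then follow as above.
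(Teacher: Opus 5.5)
Your final argument is correct, but it is not what the paper does. The paper gives no proof of Proposition~\ref{extension}; it simply imports the result from Rao and Rao (1983, Theorem 3.2.10). You instead prove it directly via Hahn--Banach.

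With the signed functional $p(f)=\inf\{\sum_j b_j\mu(D_j)-\sum_i a_i\mu(C_i) : f\le \sum_j b_j\mathbf{1}_{D_j}-\sum_i a_i\mathbf{1}_{C_i}\}$, the only nontrivial step is $p(f)\ge-\|f\|_\infty$, and your sketch of it is sound:
\begin{itemize}
\item Rounding $\|f\|_\infty$ and the $b_j$ up, and the $a_i$ down, to rationals preserves the pointwise inequality, because indicators are nonnegative.
\item Clearing denominators produces integer multiplicities, which Definition~\ref{charge} allows since the $C_i, D_j$ need not be distinct.
\item $X\in\mathcal{E}$ with $\mu(X)=1$ absorbs both the term $\|f\|_\infty\mathbf{1}_X$ and any empty sum.
\end{itemize}
Everything else is immediate: $p(f)\le\|f\|_\infty$, sublinearity, $\phi(\mathbf{1}_C)=\mu(C)$ from the trivial representations $\pm\mathbf{1}_C$, positivity, and $\widetilde\mu(X)=1$. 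So you may as well start Hahn--Banach from the zero subspace. You can also discard the first, nonnegative-coefficient version of $p$ and the detour about $p(-\mathbf{1}_C)\ge-\mu(C)$. That detour contains a slip anyway: $-\mathbf{1}_C\le g$ yields $0\le \mathbf{1}_C+g$, not $\mathbf{1}_X\le\mathbf{1}_C+g$.

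The citation buys brevity. Your route buys a self-contained argument that shows exactly where $X\in\mathcal{E}$, $\mu(X)=1$ and integer-multiplicity coherence are used. It also gives a bonus: every extension is a positive linear functional dominated by $p$. Hence the attainable values of $\widetilde\mu(D)$ for $D\in\mathcal{D}$ form exactly the interval $[-p(-\mathbf{1}_D),\,p(\mathbf{1}_D)]$. This describes the non-uniqueness of the mixing measure $Q$ in Theorem~\ref{thm-level-3} in de Finetti's lower/upper-prevision terms.
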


We apply Proposition~\ref{extension} with $X = \Pi$, ${\cal E} = {\cal C}_\Pi$, and ${\cal D} = {\cal D}_\Pi$.

\begin{theorem} \label{thm-level-3}
Fix a measurable space $(\Omega, \cal F)$ and a sub-family $\cal G$ of nonempty choice sets.  A stochastic choice rule ${\cal G} \mapsto c_G(\cdot)$ is realizable as a $\Pi$-based probability mixture if and only if it satisfies PCPS-coherence.
\end{theorem}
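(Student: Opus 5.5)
The proof has two directions. Sufficiency rests on the extension result, Proposition~\ref{extension}. Necessity follows from monotonicity and finite additivity of the mixing measure.

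\emph{Necessity.} Suppose $c$ is realized by a finitely additive probability $Q$ on $(\Pi,{\cal D}_\Pi)$, so that $c_G(E)=Q([E,G]_\Pi)$. Well-definedness is immediate: if $[E,G]_\Pi=[F,H]_\Pi$ as subsets of $\Pi$, then $c_G(E)=Q([E,G]_\Pi)=Q([F,H]_\Pi)=c_H(F)$. For the inequality, suppose $\sum_i \mathbf{1}_{C_i}\le\sum_j \mathbf{1}_{D_j}$ pointwise on $\Pi$ with $C_i,D_j\in{\cal C}_\Pi$. Each indicator is a ${\cal D}_\Pi$-simple function. Integration of simple functions against a finitely additive probability is well defined, linear and monotone. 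Hence $\sum_i Q(C_i)=\int\sum_i\mathbf{1}_{C_i}\,dQ\le\int\sum_j\mathbf{1}_{D_j}\,dQ=\sum_j Q(D_j)$. If one prefers to avoid integrals, refine everything to the finitely many atoms of the field generated by the $C_i,D_j$; there the inequality is a sum of nonnegative terms. Nonnegativity of $\mu$ is clear. So $c$ satisfies PCPS-coherence.

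\emph{Sufficiency.} Suppose $c$ is PCPS-coherent, with $\mu([E,G]_\Pi)=c_G(E)$ a well-defined positive partial charge on $(\Pi,{\cal C}_\Pi)$. To invoke Proposition~\ref{extension} with $X=\Pi$, ${\cal E}={\cal C}_\Pi$ and ${\cal D}={\cal D}_\Pi$, I must check $\Pi\in{\cal C}_\Pi$ and $\mu(\Pi)=1$.
\begin{itemize}
\item If ${\cal G}\neq\emptyset$, pick any $G\in{\cal G}$. Every PCPS satisfies $p_G(G)=1$, so $[G,G]_\Pi=\Pi$ and $\mu(\Pi)=c_G(G)=1$.
\item If ${\cal G}=\emptyset$, the statement is vacuous: $\Pi$ is a singleton consisting of the empty map. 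Any probability on it works, since there are no constraints.
\end{itemize}
The proposition then gives a finitely additive probability $Q=\widetilde\mu$ on $(\Pi,{\cal D}_\Pi)$ extending $\mu$. Thus $Q([E,G]_\Pi)=c_G(E)$ for all $E\in{\cal F}$ and $G\in{\cal G}$, which is Equation~\ref{eq75}.

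\emph{Expected obstacle.} The main point of care is the interpretation of the objects, not any hard estimate. Well-definedness must be taken literally: cylinders $[E,G]_\Pi$ and $[F,H]_\Pi$ can coincide as subsets of $\Pi$ while $(E,G)\neq(F,H)$. A subtle case is a cylinder that is empty, because no PCPS selects in $E$ from $G$. Coherence forces $c_G(E)=0$ there, since $\mathbf{1}_{[E,G]_\Pi}\le 0$ is the empty-sum case of Inequality~\ref{eq64}. Beyond this, the whole weight is carried by the Rao--Rao extension theorem, which I take as given. One should also note that ${\cal D}_\Pi$ is meant as a field of subsets of $\Pi$; the paper writes ``field of sets in $\cal F$'', which I read as a typo. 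With that reading, the hypotheses of Proposition~\ref{extension} are met exactly.
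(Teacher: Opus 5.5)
Your proposal is correct and follows essentially the same route as the paper: necessity by integrating the pointwise indicator inequality against the finitely additive $Q$ (well-definedness being immediate because $Q$ is a set function), and sufficiency by noting $[G,G]_\Pi=\Pi$ and $\mu(\Pi)=c_G(G)=1$, then invoking the Rao--Rao extension theorem (Proposition~\ref{extension}) with $X=\Pi$, ${\cal E}={\cal C}_\Pi$, ${\cal D}={\cal D}_\Pi$. Your separate treatment of ${\cal G}=\emptyset$ and your reading of ${\cal D}_\Pi$ as a field of subsets of $\Pi$ are small points of extra care that the paper's proof passes over silently.
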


\begin{proof}
Suppose first that $c$ is realizable as a $\Pi$-based probability mixture.  That is, there is a finitely additive probability measure $Q$ on $(\Pi, {\cal D}_\Pi)$ with $c_G(E) = Q([E, G]_\Pi)$ for every $E \in \cal F$ and $G \in \cal G$.  Define $\mu$ according to Equation~\ref{eq-assignment}.  This is well-defined since if $[E, G]_\Pi = [F, H]_\Pi$, we have:
\begin{equation}
c_G(E) = Q([E, G]_\Pi) = Q([F, H]_\Pi) = c_H(F),
\end{equation}
and so $\mu=Q$ on ${\cal C}_\Pi$.  Suppose Inequality~\ref{eq64} holds pointwise for $C_i, D_j \in {\cal C}_\Pi$.  Since $Q$ is a positive finitely additive measure, integrating this inequality under $Q$ yields:
\begin{equation}
\sum_{i=1}^m Q(C_i) \leq \sum_{j=1}^n Q(D_j).
\end{equation}
Since $\mu = Q$ on ${\cal C}_\Pi$, this is Inequality~\ref{eq65}.  This establishes that $\mu$ is a positive real partial charge on $(\Pi, {\cal C}_\Pi)$, so that $c$ satisfies PCPS-coherence.

Conversely, suppose $c$ satisfies PCPS-coherence, that is, the assignment of Equation~\ref{eq-assignment} is a well-defined positive real partial charge on $(\Pi, {\cal C}_\Pi)$.  For any $G \in \cal G$, Condition 1 of Definition \ref{CPS} yields $[G, G]_\Pi = \Pi$, so that $\Pi \in {\cal C}_\Pi$ and $\mu(\Pi) = \mu([G, G]_\Pi) = c_G(G) = 1$.  By the extension theorem for positive real partial charges (Proposition~\ref{extension}), we can extend $\mu$ (not necessarily non-uniquely) to a finitely additive probability measure $Q$ on the field ${\cal D}_\Pi$ generated by ${\cal C}_\Pi$.  This says that, in particular, for every $E \in \cal F$ and $G \in \cal G$:
\begin{equation}
Q([E, G]_\Pi) = \mu([E, G]_\Pi) = c_G(E),
\end{equation}
so that $c$ is realizable as a $\Pi$-based probability mixture.
\end{proof}

We validate our approach to defining Level 3 of our hierarchy by establishing that PCPS-coherence reduces to the generalized Block-Marschak condition (Cohen, 1980, Definition 2.4) in the finitary case.

\begin{Def} \label{block-marschak}
Fix an arbitrary set $\Omega$, let $\cal G$ be the finitary sub-family of subsets of $\Omega$, and let $\cal F$ be the field generated by $\cal G$.  A stochastic choice rule $G \mapsto c_G(\cdot)$ satisfies the \textbf{Generalized Block-Marschak condition} if for every $H \in \cal G$, $\alpha \in H$, and $G \subseteq H\backslash\{\alpha\}$, the coefficient:
\begin{equation} \label{eq-bm}
q(\alpha, G, H) = \sum_{E \subseteq G} (-1)^{|G\backslash E|} \, c_{H\backslash E}(\{\alpha\}),
\end{equation}
is nonnegative. 
\end{Def}

The proof of the next result is in Appendix \hyperref[app-a]{A}.

\begin{proposition} \label{prop-bm}
Fix an arbitrary set $\Omega$, let $\cal G$ be the finitary sub-family of subsets of $\Omega$, and let $\cal F$ be the field generated by $\cal G$.  A stochastic choice rule $G \mapsto c_G(\cdot)$ satisfies PCPS-coherence if and only if it satisfies the Generalized Block-Marschak condition.
\end{proposition}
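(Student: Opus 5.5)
By Theorem~\ref{thm-level-3}, PCPS-coherence is equivalent to realizability as a $\Pi$-based probability mixture. So the plan is to show that, in the finitary case, realizability is equivalent to the Generalized Block-Marschak condition.

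\emph{Step 1: reduce PCPSs to linear orders.} I will use the fact, to be proved in Section~\ref{sec:6}, that for the finitary family $\cal G$ the natural map from total orders on $\Omega$ to PCPSs is a bijection. I would prove this directly here, since the argument is short.

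Given a PCPS $p$ with selection $s$, define $a \succ b$ iff $s(\{a,b\}) = a$. Suppose $a, b \in H$ with $a \neq b$. Then the chain rule applied to $\{a,b\} \subseteq H$ gives $p_H(\{a,b\})\, p_{\{a,b\}}(\{b\}) = p_H(\{b\})$. Hence $s(H)$ is $\succ$-maximal in $H$.

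Transitivity follows by applying the same rule to $\{a,b,c\}$. Injectivity is clear. Thus $\Pi$ is identified with the set of linear orders, and
\[
[\{\alpha\},H]_\Pi = \{\succ : \alpha \text{ is the top element of } H\}.
\]
Every cylinder $[E,H]_\Pi$ with $E \in \cal F$ is a finite disjoint union of such sets over $\alpha \in E \cap H$.

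\emph{Step 2: necessity.} Suppose $c_G(E) = Q([E,G]_\Pi)$. By inclusion--exclusion, $q(\alpha,G,H)$ equals the $Q$-measure of the event that every element of $G$ ranks above $\alpha$ and $\alpha$ ranks above every element of $H \setminus (G \cup \{\alpha\})$. This event is a finite Boolean combination of cylinders, so it lies in ${\cal D}_\Pi$. Its measure is therefore nonnegative.

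\emph{Step 3: sufficiency.} This is the main step, and I would follow Cohen (1980). Fix a finite $H$. Define a function on orderings of $H$ by a product of ratios of Block-Marschak coefficients, in the standard Falmagne construction: for the order $\alpha_1 \succ \dots \succ \alpha_k$, use $\prod_i q(\alpha_i, \{\alpha_1,\dots,\alpha_{i-1}\}, H)$ divided by the appropriate marginal terms.

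Falmagne's theorem then says this gives a probability $P_H$ on the linear orders of $H$ whose induced top-choice probabilities are $c_{H'}(\{\alpha\})$ for all $H' \subseteq H$. I would take it as known in the finite case. The marginals of $P_H$ on $H' \subseteq H$ coincide with $P_{H'}$. The step I expect to need care is uniqueness: for $|H| \geq 4$ the rank distribution is not unique. I would therefore avoid uniqueness and use the finitely additive extension instead.

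Concretely, I would define $Q$ on the field of ``finite-dimensional'' events, namely events depending only on the restriction of $\succ$ to some finite $H$. Consistency of these definitions is the one subtle point. To handle it, I would check that the product/ratio construction in Falmagne's formula is itself projective: summing over positions of an added element recovers the smaller formula by the Möbius identities for the $q$'s.

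If that check fails, I would fall back on a compactness argument. For each finite set $\Phi$ of cylinder constraints, the set of admissible $Q$ on the finite field is nonempty by Falmagne's theorem and compact. The finite-intersection property then yields a consistent finitely additive $Q$ on ${\cal D}_\Pi$ via Tychonoff.

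Either way, $Q([\{\alpha\},H]_\Pi) = c_H(\{\alpha\})$. Finite additivity then extends this to $Q([E,H]_\Pi) = c_H(E)$, because $c_H$ is determined by its point masses on the finite set $H$. Theorem~\ref{thm-level-3} gives PCPS-coherence.
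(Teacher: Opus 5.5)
Your Steps 1 and 2 are sound. Step 2 is the paper's forward direction in different clothing. The paper computes the same pointwise Möbius sum on $\Pi$ ($q_p(\alpha,G,H)=1$ iff $B_p(\alpha,H)=G$, and $0$ otherwise). It then transfers the resulting even-versus-odd inequality between cylinder indicators to prices directly through PCPS-coherence. You instead integrate against the realizing $Q$ supplied by Theorem~\ref{thm-level-3}.

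In Step 3, your first plan fails: the Falmagne product construction is not projective, so the check you propose cannot succeed. The claim that the marginal of $P_H$ on $H'\subseteq H$ equals $P_{H'}$ is false. The product-of-ratios construction builds a ranking top-down as a Markov chain on the set $T$ of already-ranked elements. The transition $T\to T\cup\{\beta\}$ has probability $q(\beta,T,H)/\sum_{\gamma\notin T}q(\gamma,T,H)$. Marginalizing out an element destroys this Markov property.

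Here is a counterexample (rankings listed best first). Take $H=\{1,2,3,4,5\}$ and let the choice rule be generated by $\tfrac12(5\,1\,2\,4\,3)+\tfrac12(2\,1\,3\,4\,5)$.
\begin{itemize}
\item No proper nonempty top set is shared by the two rankings, so the construction on $H$ returns exactly this mixture.
\item Its marginal on $H'=\{1,2,3,4\}$ is $\tfrac12(1\,2\,4\,3)+\tfrac12(2\,1\,3\,4)$.
\item The construction on $H'$ instead returns the uniform distribution on $1\,2\,3\,4$, $1\,2\,4\,3$, $2\,1\,3\,4$, $2\,1\,4\,3$. This is Fishburn's observationally equivalent pair.
\end{itemize}

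Your compactness fallback is therefore what actually carries the proof. It works if you place all constraints in one compact space:
\begin{itemize}
\item The finitely additive probabilities on all of ${\cal D}_\Pi$ form a closed, hence compact, subset of $[0,1]^{{\cal D}_\Pi}$.
\item For each finite $S\subseteq\Omega$, the constraints $Q([\{\alpha\},K]_\Pi)=c_K(\{\alpha\})$ for $\alpha\in K\subseteq S$ cut out a closed subset $\mathcal{M}_S$.
\item $\mathcal{M}_S\neq\emptyset$, because Falmagne's $Q^S$ pushes forward to a finitely supported measure on $\Pi$ once each order on $S$ is extended to a total order on $\Omega$.
\item $\mathcal{M}_{S_1}\cap\cdots\cap\mathcal{M}_{S_n}\supseteq\mathcal{M}_{S_1\cup\cdots\cup S_n}$, which gives the finite intersection property.
\end{itemize}
If you work only with measures on finite subfields, as your wording suggests, you need an inverse-limit version of the same argument.

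Compared with the paper, this is a genuinely different and heavier route. The paper never constructs a global $Q$ in this proof. PCPS-coherence only ever involves finitely many cylinders, so the paper:
\begin{itemize}
\item takes $S$ to be the union of the choice sets in a given pair of portfolios;
\item gets $Q^S$ from the finite Block--Marschak theorem;
\item lifts orders on $S$ to $\Pi$ via Szpilrajn, noting that the cylinder indicators depend only on the restriction to $S$;
\item integrates the pointwise inequality under $Q^S$.
\end{itemize}
The passage from these finitary inequalities to a global finitely additive measure is then done once, by the Rao--Rao extension theorem inside Theorem~\ref{thm-level-3}. Your Tychonoff argument essentially reproves that extension step in this special case. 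It delivers the realizing measure directly, but it forces you to confront the consistency issue that the paper's formulation avoids entirely.
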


The characterization of Level 3 of our conditional probability hierarchy via PCPS-coherence is related in spirit to the recent Dutch book theorem of Catonini and Lanzani (2026).  Their Dutch book condition characterizes when a system of conditional beliefs can be represented by global object -- namely, a(prior) lexicographic conditional probability system (Blume, Brandenburger, and Dekel, 1991).  We likewise ask when a family of locally specified choice probabilities (a stochastic choice rule) can be represented by a global object -- more precisely, by a probabilistic mixture of global objects, namely, PCPSs.  The analysis of Catonini and Lanzani (2026) may be relevant to addressing the open representation question we pose in Section~\hyperref[sec:7d]{7.d}.

\section{Signed Mixtures of PCPSs} \label{sec:5}
Level 4 of our conditional probability hierarchy nests Level 3 by relaxing non-negativity of the mixing probability measure $Q$.  To define this level, we employ the same measurable structure as for probabilistic mixtures and simply replace unsigned with signed measures.

\begin{Def} \label{signed-realizable}
A stochastic choice rule $G \mapsto c_G(\cdot)$ is \textbf{realizable as a $\Pi$-based signed probability mixture} if there is a finitely additive signed probability measure $Q$ on $(\Pi, {\cal D}_\Pi)$ such that for all $E \in \cal F$ and $G \in \cal G$:
\begin{equation} \label{eq95}
c_G(E) = Q([E, G]_\Pi).
\end{equation}
\end{Def}

The introduction of negativity in this definition can be seen as a formal move, or it can be given a behavioral interpretation (Dogan and Yildiz, 2023; Brandenburger et al., 2026).  (See Example~\ref{ex-b3} in Appendix \hyperref[app-b]{B}.)  We partially characterize Level 4 by transferring a result from Dogan and Yildiz (2023).  Fix finite $\Omega$ and let ${\cal F} = 2^\Omega$ and ${\cal G} = 2^\Omega\backslash\{\emptyset\}$.  Dogan and Yildiz (2023, Theorem 2) show that every stochastic choice rule on this domain admits a signed probabilistic mixture over total orders.  Under these domain assumptions, there is a bijection between total orders and PCPSs.  (This follows from Proposition~\ref{PCPS-TO-bijection} in the next section.)  Therefore, a signed-mixture representation over total orders transfers directly to a signed-mixture representation over PCPSs, and we get an analogous result.

\begin{theorem} \label{thm-level-4}
Fix a finite set $\Omega$ and let ${\cal F} = 2^\Omega$ and ${\cal G} = 2^\Omega\backslash{\emptyset}$.  Every stochastic choice rule $G \mapsto c_G(\cdot)$ is realizable as a $\Pi$-based signed probability mixture. 
\end{theorem}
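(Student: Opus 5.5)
The proof transfers Dogan and Yildiz (2023, Theorem 2) through the bijection between total orders and PCPSs. It can also be done directly by linear algebra, which I use as a backup.

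\textbf{Step 1 (finite structure).} Since $\Omega$ is finite, $\Pi$ is finite: each PCPS is determined by its selection $s$ on the finite family ${\cal G}$. The cylinders $[\{\omega\},G]_\Pi$ with $\omega\in G$ partition $\Pi$ for each fixed $G$. Moreover $[E,G]_\Pi=\bigcup_{\omega\in E\cap G}[\{\omega\},G]_\Pi$, and this union is disjoint. Singletons $\{p\}$ are intersections of such cylinders, namely $\bigcap_G[\{s(G)\},G]_\Pi$, so ${\cal D}_\Pi=2^\Pi$. A finitely additive signed measure $Q$ on $(\Pi,{\cal D}_\Pi)$ is therefore just a weight vector $(Q(p))_{p\in\Pi}$. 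Since $c_G$ is additive, it suffices to match $c_G(\{\omega\})=\sum_{p:\,s_p(G)=\omega}Q(p)$ for all $G\in{\cal G}$ and $\omega\in G$. Total mass $1$ then follows automatically from $c_G(G)=1$.

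\textbf{Step 2 (transfer).} For a total order $\succ$ on $\Omega$, let $p^\succ$ be the PCPS selecting the $\succ$-maximal element of each $G$; the chain rule is immediate. By Proposition~\ref{PCPS-TO-bijection}, whose proof does not depend on Theorem~\ref{thm-level-4}, the map $\succ\mapsto p^\succ$ is a bijection when ${\cal G}=2^\Omega\backslash\{\emptyset\}$. I would also note why directly: every PCPS satisfies WASRP (Theorem~\ref{thm-level-1}), and hence satisfies classical WARP on the full domain. Since the domain includes all pairs and triples, it is rationalized by a unique linear order.

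Dogan and Yildiz supply signed weights $\lambda(\succ)$ summing to $1$ with $c_G(\{\omega\})=\sum_{\succ:\,\omega=\max_\succ G}\lambda(\succ)$. Setting $Q(p^\succ)=\lambda(\succ)$ gives exactly the equations of Step 1, and this completes the proof.

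\textbf{Main obstacle and fallback.} The only delicate point is the bijection step, in particular that every PCPS arises from a total order. If a self-contained argument is wanted instead of citing Dogan and Yildiz, I would show directly that the linear map $Q\mapsto(Q([\{\omega\},G]_\Pi))_{G,\omega\in G}$ reaches every stochastic choice rule. The plan is to prove that the indicator vectors of $\Pi$ span the affine space cut out by $\sum_{\omega\in G}x_{G,\omega}=1$. I would do this by Möbius inversion, as in Block and Marschak: the functions $q(\alpha,G,H)$ of Definition~\ref{block-marschak} invert to $c$ without any sign constraint. Dropping nonnegativity, the Falmagne-style construction then assigns (possibly negative) weights to orders, which is precisely what Theorem 2 of Dogan and Yildiz establishes.
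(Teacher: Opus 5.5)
Your proposal is correct and follows the paper's own argument: it transfers the Dogan--Yildiz (2023, Theorem 2) signed mixture over total orders to PCPSs via the bijection of Proposition~\ref{PCPS-TO-bijection}, and your Step 1 spells out the finite bookkeeping the paper leaves implicit (${\cal D}_\Pi = 2^\Pi$, and matching singletons suffices). One small aside: the bijection is not actually the delicate point you flag, because pushing the signed weights forward along the map $f$ of Proposition~\ref{pro-well-defined}, i.e.\ setting $Q(p)=\sum_{\rhd : f(\rhd)=p}\lambda(\rhd)$, already works without surjectivity or injectivity.
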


Saito (2018) proves a more general result on realization of stochastic choice rules as signed mixtures of orders, where the family $\cal G$ of choice sets can be an arbitrary subset of $2^\Omega\backslash\{\emptyset\}$.  Saito (2018, Corollary 6(ii) and Footnote 22) demonstrates how Theorem 2 in Dogan and Yildiz (2023) follows.  The proof of the Dogan-Yildiz result works via a (significant) extension of the Ford-Fulkerson Theorem from combinatorial matrix theory (Ford and Fulkerson, 2015) to allow for negative row and column sums.  The generalization of this method to an infinite choice domain $\Omega$ is, we believe, open -- and, therefore,  the question of what is the analog to Theorem~\ref{thm-level-4} for infinite $\Omega$ is open.

\section{PCPSs and Total Orders} \label{sec:6}
We have constructed our hierarchy using PCPSs as the basic building block.  We now examine the relationship between our approach and the existing one of building stochastic choice rules from total orders.  Fix a measurable space $(\Omega, \cal F)$ and let $\cal G$ be a family of (certain) nonempty finite sets in $\cal F$.  The assumption that $\cal G$ consists of finite sets is obviously more restrictive than our standing assumptions on $\cal G$ so far.  It allows us to avoid issues of well-ordering.  (See also Section~\ref{sec:7}.d.)

There is a natural map from total orders on $\Omega$ to selections $s : {\cal G} \rightarrow \Omega$.  Formally, given a total order $\rhd$ on $\Omega$, define the selection $s^\rhd$ by:
\begin{equation} \label{eq-well-defined}
s^\rhd(G) = \, ! \, x \in G \,\, \text{such that} \,\, x \rhd y \,\, \text{for all} \,\, y \in G\backslash \{x\}.
\end{equation}
Finiteness of each $G$ ensures this operation is well-defined.  We write $p^\rhd$ for the putative PCPS defined by the selection $s^\rhd$.  The next proposition says that $p^\rhd$ is indeed a PCPS.  Let $\Lambda$ denote the set of all total orders on $\Omega$ and recall our earlier notation $\Pi$ for the set of all PCPSs relative to some given family $\cal G$.

\begin{proposition} \label{pro-well-defined}
Fix a measurable space $(\Omega, \cal F)$ and a sub-family $\cal G$ of $\cal F$ consisting of (certain) nonempty finite sets.  The operation $f$ defined by:
\begin{equation}
f(\rhd) = p^\rhd,
\end{equation}
defines a map from $\Lambda$ to $\Pi$.
\end{proposition}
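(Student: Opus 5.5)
We need to show two things: that $s^\rhd$ is a well-defined selection on $\cal G$, and that the map $G \mapsto p^\rhd_G$ with $p^\rhd_G(F) = \mathbf{1}_F(s^\rhd(G))$ is a CPS in the sense of Definition~\ref{CPS}. Once both hold, $p^\rhd$ is a PCPS by Definition~\ref{PCPS}, so $f(\rhd) = p^\rhd \in \Pi$ and $f$ maps $\Lambda$ into $\Pi$.

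\emph{Well-definedness.} Fix $G \in \cal G$. Since $G$ is nonempty and finite and $\rhd$ is a total order, $G$ has a $\rhd$-maximal element; I will argue this by induction on $|G|$, comparing the current maximum with each new element. Uniqueness follows from antisymmetry: if $x$ and $x'$ both dominate all other elements of $G$ and $x \neq x'$, then $x \rhd x'$ and $x' \rhd x$, which is a contradiction. Hence $s^\rhd(G)$ exists, is unique, and lies in $G$, so $s^\rhd$ is a selection. For each $G$, $p^\rhd_G$ is the Dirac measure at $s^\rhd(G)$, so it belongs to $\Delta(\Omega)$ and satisfies $p^\rhd_G(G) = 1$. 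This gives Condition 1.

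\emph{Chain rule.} The substantive step is Condition 2. Fix $E \subseteq F \subseteq G$ with $E \in \cal F$ and $F, G \in \cal G$. The key lemma is: if $s^\rhd(G) \in F$, then $s^\rhd(F) = s^\rhd(G)$. To prove it, note that $x = s^\rhd(G)$ dominates every element of $G \backslash \{x\} \supseteq F \backslash \{x\}$ and lies in $F$, so by uniqueness it is the maximum of $F$. I then split into two cases.
\begin{enumerate}
\item If $s^\rhd(G) \notin F$, then $s^\rhd(G) \notin E$. Both sides of the chain rule are $0$, since $p_G(E) = 0$ and $p_G(F) = 0$.
\item If $s^\rhd(G) \in F$, then $p_G(F) = 1$ and, by the lemma, $p_F(E) = \mathbf{1}_E(s^\rhd(F)) = \mathbf{1}_E(s^\rhd(G)) = p_G(E)$. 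The chain rule again holds.
\end{enumerate}

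I expect no real obstacle here. The only point needing care is the lemma that the maximum of $G$, when it lies in the subset $F$, is also the maximum of $F$; this heredity property is exactly what the chain rule encodes. Note that no closure condition on $\cal G$ is needed.
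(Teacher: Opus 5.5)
Your proof is correct and follows essentially the same route as the paper: you split into the same two cases on whether $s^\rhd(G) \in F$, and in the second case you use the same observation that $s^\rhd(F) = s^\rhd(G)$. Your explicit checks that $s^\rhd$ is a well-defined selection and that Condition 1 holds fill in what the paper only asserts in the text before the proposition.
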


\begin{proof}
Fix $E \subseteq F \subseteq G$, with $E \in \cal F$ and $F, G \in \cal G$.  We need to show that the chain rule holds, that is:
\begin{equation} \label{eq-chain-again}
p_G^\rhd(E) = p^\rhd_G(F) \times p^\rhd_F(E).
\end{equation}
If $s^\rhd(G) \notin F$, then $p^\rhd_G(F) = 0$.  Since $E \subseteq F$, we also have $p^\rhd_G(E) = 0$, and Equation~\ref{eq-chain-again} holds.  If $s^\rhd(G) \in F$, then $s^\rhd(G) = s^\rhd(F)$, so that: (i) $p^\rhd_G(F) = 1$; and (ii) $p^\rhd_G(E) = p^\rhd_F(E)$.  Equation~\ref{eq-chain-again} again holds.
\end{proof}

We next present two examples that illustrate the behavior of the map $f : \Lambda \rightarrow \Pi$.

\begin{Ex} \label{ex-not-exist}
Let $\Omega = \{x, y, z\}$, ${\cal F} = 2^\Omega$, and ${\cal G} = \{\{x, y\}, \{y, z\}, \{z, x\}, \{x\}, \{y\}, \{z\}\}$.  Note that $\cal G$ is a nonempty $\pi$-system.  Consider the PCPS given by:
\begin{align}
p_{\{x, y\}}(\{x\}) &= p_{\{y, z\}}(\{y\}) = p_{\{z, x\}}(\{z\}) = 1, \\
p_{\{x\}}(\{x\}) &= p_{\{y\}}(\{y\}) = p_{\{z\}}(\{z\}) = 1.
\end{align}
A total order $\rhd$ that maps under $f$ to the PCPS $p$ would need $x \rhd y \rhd z \rhd x$, a contradiction.  We conclude that the map $f$ need not be surjective.
\end{Ex}

\begin{Ex} \label{ex-not-injective}
Let $\Omega = \{x, y, z\}$, ${\cal F} = 2^\Omega$, and ${\cal G} = \{\{x, y\},\{y, z\}, \{y\}, \{x, y, z\}\}$.  Note that $\cal G$ is a nonempty $\pi$-system and is closed under (finite) unions.  We define a PCPS, for $E \subseteq \Omega$, by: 
\begin{equation}
p_G(E) = \delta_y(E),
\end{equation}
where $\delta_y$ is the Dirac measure on $y$.  Now consider the two total orders on $\Omega$:
\begin{equation}
y \rhd_1 z \rhd_1 x \,\,\, \text{and} \,\,\, y \rhd_2 x \rhd_2 z.
\end{equation}
It is easily verified that both $\rhd_1$ and $\rhd_2$ map to the PCPS $p$ under $f$.  We conclude that the the map $f$ need not be injective.
\end{Ex}

We next establish conditions under which the map $f$ is, respectively, surjective and bijective.  We will make use of the fact that a PCPS satisfies the Chernoff property (Chernoff, 1954, Postulate 4).  (If $\cal G$ is a nonempty $\pi$-system, the Chernoff property is equivalent to the WASRP axiom from Section~\ref{sec:2}.  But we do not want to assume this closure condition yet in this section.)

\begin{proposition} \label{prop-chernoff}
Fix a PCPS ${\cal G} \mapsto p_G(\cdot)$ defined by a selection $s$.  If $F \subseteq G$ with $F, G \in \cal G$, and $s(G) \in F$, then $s(F) = s(G)$.
\end{proposition}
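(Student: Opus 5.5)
The plan is to apply the chain rule of Definition~\ref{CPS} directly, with singleton test sets. There is one subtlety: singletons need not belong to $\cal F$. So I will use the event $F$ itself together with sets that separate points, or work directly with the Dirac structure. The cleanest route is to take the event $E$ in the chain rule to be any measurable set containing $s(G)$ and compare.

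First, since $s(G) \in F$, we have $p_G(F) = \mathbf{1}_F(s(G)) = 1$ by Equation~\ref{eq6}. Next, for an arbitrary $E \in \cal F$, apply the chain rule to the triple $E \cap F \subseteq F \subseteq G$. Here $E \cap F \in \cal F$ and $F, G \in \cal G$. This gives
\begin{equation*}
p_G(E \cap F) = p_G(F) \times p_F(E \cap F) = p_F(E \cap F).
\end{equation*}
Since $p_G$ concentrates on $s(G) \in F$, the left side equals $p_G(E)$. Since $p_F$ concentrates on $s(F) \in F$, the right side equals $p_F(E)$. Hence $p_G(E) = p_F(E)$ for every $E \in \cal F$, so the Dirac measures $\delta_{s(G)}$ and $\delta_{s(F)}$ coincide as measures on $(\Omega, \cal F)$.

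The main obstacle is the last step: passing from equality of measures to the pointwise identity $s(F) = s(G)$. This step is immediate if $\cal F$ separates points, for instance if singletons are measurable; in Section~\ref{sec:6} the sets in $\cal G$ are finite subsets belonging to $\cal F$, and in the examples $\cal F = 2^\Omega$. Otherwise two distinct points could induce the same Dirac measure, and the selection is only determined up to $\cal F$-indistinguishability. I would handle this as follows. If singletons (or at least the points of $F$) lie in $\cal F$, I take $E = \{s(G)\}$ and read off $p_F(\{s(G)\}) = 1$, which forces $s(F) = s(G)$. In the general case I would state the conclusion as equality of the induced Dirac measures. This is all that matters for the PCPS, since a PCPS is determined by its measures. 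The pointwise statement holds up to the harmless choice of representative selection.
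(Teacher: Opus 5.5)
Your proof is correct. When singletons are measurable, your final step is exactly the paper's argument: the paper applies the chain rule to $\{s(G)\} \subseteq F \subseteq G$, uses $p_G(\{s(G)\}) = p_G(F) = 1$ to get $p_F(\{s(G)\}) = 1$, and reads off $s(F) = s(G)$. What you add is a preliminary application of the chain rule to $E \cap F \subseteq F \subseteq G$ for arbitrary $E \in \mathcal F$, which gives $p_F = p_G$ on all of $\mathcal F$ with no assumption on points. The paper's one-line version is shorter but silently needs $\{s(G)\} \in \mathcal F$; yours isolates exactly where point separation is used.

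Your caveat is not excess caution. Take $\Omega = \{a, b, c\}$, $\mathcal F = \{\emptyset, \{a, b\}, \{c\}, \Omega\}$, $\mathcal G = \{\{a, b\}, \Omega\}$, $s(\Omega) = a$ and $s(\{a, b\}) = b$. The only nontrivial chain-rule triples have $E \in \{\emptyset, \{a, b\}\}$, and both check. So this is a PCPS defined by $s$ with $s(\Omega) \in \{a, b\}$ but $s(\{a, b\}) \neq s(\Omega)$. This $\mathcal G$ consists of finite sets and is closed under unions, so the assumptions of Section~\ref{sec:6} do not supply the separation either.

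Your remark about a representative selection can be made precise. This matters because Proposition~\ref{PCPS-TO-Surjection} uses the pointwise form. Write $[x]$ for the class of points not separated from $x$ by $\mathcal F$, pick one point $r([x])$ in each class, and set $s'(G) = r([s(G)])$. Every $G \in \mathcal F$ is a union of classes, so $s'(G) \in G$ and $s'$ defines the same PCPS. If $F \subseteq G$ with $s'(G) \in F$, then $s(G) \in F$. Your identity $\delta_{s(F)} = \delta_{s(G)}$ then gives $[s(F)] = [s(G)]$, hence $s'(F) = s'(G)$.
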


\begin{proof}
If $s(G) \in F$, then, by the chain rule:
\begin{equation} \label{eq-cher}
p_G(\{s(G)\}) = p_G(F) \times p_F(\{s(G)\}),
\end{equation}
so that, using $p_G(\{s(G)\})$ and $p_G(F) = 1$, we get $p_F(\{s(G)\}) = 1$.  It follows that $s(F) = s(G)$.
\end{proof}

We now state and prove the surjection result.

\begin{proposition} \label{PCPS-TO-Surjection}
Fix a measurable space $(\Omega, \cal F)$ and a sub-family $\cal G$ of $\cal F$ which consists of (certain) nonempty finite sets and is closed under finite unions.  Then the map $f$ is a surjection.
\end{proposition}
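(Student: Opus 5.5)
The plan is to build a strict partial order on $\Omega$ out of the selection $s$ of a given PCPS $p \in \Pi$. I would then extend it to a total order $\rhd$ and check that $f(\rhd) = p$, i.e.\ $s^\rhd = s$.

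\textbf{Step 1: the revealed relation.} Fix $p \in \Pi$ with selection $s$. For $x \neq y$ in $\Omega$, define $x \succ y$ if there is some $G \in \cal G$ with $x, y \in G$ and $s(G) = x$. By construction, $s(G) \succ y$ for every $G \in \cal G$ and every $y \in G \backslash \{s(G)\}$. So it suffices to show that $\succ$ is a strict partial order, and then to find a total order containing it.

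\textbf{Step 2: asymmetry and transitivity.} These use closure under finite unions together with the Chernoff property, Proposition~\ref{prop-chernoff}.

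For asymmetry, suppose $x \succ y$ via $G$ (so $s(G) = x$) and $y \succ x$ via $H$ (so $s(H) = y$), with $x, y \in G \cap H$. Put $K = G \cup H \in \cal G$; then $s(K) \in G$ or $s(K) \in H$.
\begin{itemize}
\item If $s(K) \in G$, Proposition~\ref{prop-chernoff} gives $s(K) = s(G) = x$. Since $x \in H$, applying it again gives $s(H) = x \neq y$, a contradiction.
\item If $s(K) \in H$, the symmetric argument gives the same contradiction.
\end{itemize}

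For transitivity, suppose $x \succ y$ via $G$ and $y \succ z$ via $H$, with $x \neq z$. Again set $K = G \cup H$.
\begin{itemize}
\item If $s(K) \in G$, then $s(K) = x$ by Proposition~\ref{prop-chernoff}.
\item If $s(K) \in H$, then $s(K) = s(H) = y$. But $y \in G$, so Proposition~\ref{prop-chernoff} gives $s(G) = y \neq x$, a contradiction.
\end{itemize}
Hence $s(K) = x$. Since $z \in K$, this gives $x \succ z$. (The case $x = z$ is excluded by asymmetry.) I expect this transitivity step to be the main obstacle, because it is the only place where closure under unions is genuinely needed. Example~\ref{ex-not-exist} shows the conclusion fails without it.

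\textbf{Step 3: extension and verification.} By the Szpilrajn extension theorem, the strict partial order $\succ$ extends to a total order $\rhd$ on $\Omega$. This uses the axiom of choice, which is harmless here since $\Omega$ may be infinite.

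For each $G \in \cal G$ and each $y \in G \backslash \{s(G)\}$ we have $s(G) \succ y$, hence $s(G) \rhd y$. So $s(G)$ is the $\rhd$-top element of the finite set $G$, that is, $s^\rhd(G) = s(G)$. Therefore $p^\rhd = p$, and $f$ is surjective.
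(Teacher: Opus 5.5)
Your proof is correct and follows the paper's approach: the same revealed relation ($x$ beats $y$ when $x = s(G)$ for some $G \ni y$), the Chernoff property applied inside unions of choice sets, and then Szpilrajn's theorem. The only difference is in the middle step: you show directly, using unions of just two sets, that the revealed relation is asymmetric and transitive, whereas the paper proves acyclicity via the union of all sets along a minimal cycle and then passes to the transitive closure, so your route is slightly more economical and shows that the revealed relation is already a strict partial order.
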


\begin{proof}
Fix a PCPS ${\cal G} \mapsto p_G(\cdot)$ defined by a selection $s$.  Define a binary relation $\rhd_p$ on $\Omega$ by:
\begin{equation}
x \rhd_p y \,\, \text{if there is a} \,\, G \in {\cal G} \,\, \text{with} \,\, x = s(G) \,\, \text{and} \,\, y \in G \backslash \{x\}.
\end{equation}
We first show that $\rhd_p$ is acyclic.  Suppose not.  Take a cycle of minimal length and write:
\begin{equation}
x_1 \rhd_p x_2 \rhd_p \cdots \rhd_p x_n \rhd_p x_1.
\end{equation}
For each $i = 1, \ldots, n$ are distinct.  For each $i = 1, 2, \ldots, n$, choose $G_i \in \cal G$ such that $s(G_i) = x_i$ and $x_{i+1} \in G_i$, where $x_{n+1} = x_1$.  Since $\cal G$ is closed under finite unions:
\begin{equation}
U = \bigcup_{i=1}^n G_i \in \cal G.
\end{equation}
Let $z = s(U)$.  Since $z \in U$, there is some $j$ with $z \in G_j$.  Because $G_j \subseteq U$ and $s(U) \in G_j$, Proposition~\ref{prop-chernoff} gives $s(G_j) = z$.  Since $s(G_j) = x_j$, we have $z = x_j$.  But $x_j \in G_{j-1}$ by construction, from which $s(U) = x_j \in G_{j-1} \subseteq U$.  Another application of Proposition~\ref{prop-chernoff} gives $G_{j-1} = s(U) = x_j$.  However, we also have $s(G_{j-1}) = x_{j-1}$ by construction.  Therefore $x_{j-1} = x_j$, contradicting the definition of the cycle.  This establishes that $\rhd_p$ is acyclic.

The transitive closure of $\rhd_p$ is therefore a strict partial order.  By the Szpilrajn order-extension theorem (Szpilrajn, 1930), it extends to a (strict) total order $\rhd$ on $\Omega$.  Note that for every $G \in \cal G$ and $y \in G \backslash \{s(G)\}$, we have $s(G) \rhd_p y$ and hence $s(G) \rhd y$.  It follows that $s(G)$ is the $\rhd$-maximal element of $G$.  We conclude that the PCPS induced by $\rhd$ coincides with $\rhd_p$, and so the map $f$ is a surjection.
\end{proof}

Next is a bijection result under a stronger closure condition.

\begin{proposition} \label{PCPS-TO-bijection}
Fix a measurable space $(\Omega, \cal F)$ and a sub-family $\cal G$ of $\cal F$ which consists of (certain) nonempty finite sets, is closed under finite unions, and includes all two-element choice sets.  Then the map $f$ is a bijection.
\end{proposition}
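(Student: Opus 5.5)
Surjectivity is already given by Proposition~\ref{PCPS-TO-Surjection}, since $\cal G$ consists of nonempty finite sets and is closed under finite unions. So the only new work is injectivity of $f$.

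The plan is to show that a total order $\rhd$ can be recovered from $p^\rhd$ using only the two-element choice sets. Take total orders $\rhd_1 \neq \rhd_2$ on $\Omega$. Then there are distinct $x, y \in \Omega$ with $x \rhd_1 y$ and $y \rhd_2 x$. By hypothesis $\{x,y\} \in \cal G$. Definition~\ref{eq-well-defined} then gives $s^{\rhd_1}(\{x,y\}) = x$ and $s^{\rhd_2}(\{x,y\}) = y$.

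It remains to check that these two selections yield different PCPSs. This requires $\{x\} \in \cal F$ (or some measurable set separating $x$ from $y$), so that
\begin{equation*}
p^{\rhd_1}_{\{x,y\}}(\{x\}) = 1 \neq 0 = p^{\rhd_2}_{\{x,y\}}(\{x\}).
\end{equation*}
Measurability of the singleton holds because $\{x,y\} \in \cal G \subseteq \cal F$ and $\{x,z\} \in \cal F$ for any $z \ne y$, so $\{x\} = \{x,y\}\cap\{x,z\}$ lies in $\cal F$ as a $\sigma$-field. If $\Omega$ has only two points, then $\{x\} = \Omega \setminus \{y\}$ works in a similar way, except in the degenerate case $\Omega=\{x,y\}$ with trivial $\cal F$. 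That case needs a brief remark: if the elements of $\cal G$ are finite sets and $\cal F$ fails to separate points, PCPSs cannot distinguish points and injectivity must be read up to this identification. So $f(\rhd_1) \neq f(\rhd_2)$, and $f$ is injective. Together with surjectivity, $f$ is a bijection.

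The main obstacle is not combinatorial. It is the measure-theoretic point that two PCPSs with different selections are actually distinct maps into $\Delta(\Omega)$, i.e.\ that Dirac measures at distinct points differ on $\cal F$. I would first settle this with the intersection argument above, and note any degenerate case separately.
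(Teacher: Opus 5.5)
Your proof is correct and follows the paper's route: surjectivity comes from Proposition~\ref{PCPS-TO-Surjection}, and injectivity holds because the selections on the two-element sets $\{x,y\}\in\mathcal G$ recover the order. You add a check the paper skips by speaking of ``the'' selection of a PCPS, namely that distinct selections give distinct PCPSs. Your intersection argument for this needs $z\notin\{x,y\}$ (not merely $z\neq y$), so it works whenever $|\Omega|\ge 3$; for $|\Omega|=2$ the $\sigma$-field is either $2^\Omega$ (fine) or trivial, and in the trivial case both total orders induce the same map $\mathcal G\to\Delta(\Omega)$, so your caveat is a genuine exception to the literal statement rather than a gap in your argument.
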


\begin{proof}
Proposition~\ref{PCPS-TO-Surjection} implies that the map $f$ is surjective.  It remains to prove injectivity.  Suppose two (strict) total orders $\rhd_1$ and $\rhd_2$ induce the same PCPS ${\cal G} \mapsto p_G(\cdot)$ with associated selection $s$.  Fix distinct $x, y \in \Omega$.  Since $\{x, y\} \in \cal G$:
\begin{equation} \label{eq-binary}
x \rhd_1 y \,\, \text{if and only if} \,\, s(\{x, y\}) = x \,\, \text{if and only if} \,\, x \rhd_2 y.
\end{equation}
This says that $\rhd_1$ and $\rhd_2$ agree on every pair of distinct elements of $\Omega$.  We conclude that $\rhd_1 = \rhd_2$, so that $f$ is an injection, as required.
\end{proof}

Proposition~\ref{PCPS-TO-Surjection} is tight.  Example~\ref{ex-not-exist} shows that we cannot drop closure under unions (even if we add that $\cal G$ is a nonempty $\pi$-system).  Proposition~\ref{PCPS-TO-bijection} is also tight.  Example~\ref{ex-not-injective} shows that we cannot drop the requirement that $\cal G$ contain all two-element choice sets.

The results in this section justify the discussion in the Introduction about the parsimony of the PCPS approach.  Under our standing assumptions, the ``best case" is Proposition~\ref{PCPS-TO-Surjection}, which establishes a surjection.  We say ``best case" because we do not actually require finiteness of choice sets or closure under finite unions.  But, our point is that even in this case, more than one total order may map to the same PCPS.  Example~\ref{ex-not-injective} is a demonstration.  The two total orders $\rhd_1$ and $\rhd_2$ there are indistinguishable because the choice set $\{z, x\}$ that would separate them is not present in $\cal G$.  Indeed, if the family of choice set $\cal G$ is specified first and behavior examined second, then one can argue that $\rhd_1$ and $\rhd_2$ should be identified, as happens under the PCPS in the example.  Conversely, if the philosophy is that preference precedes the specification of choice sets, then our argument loses weight and total orders become a very natural starting point.

\section{Discussion}  \label{sec:7}
In this section, we discuss strict nesting of the levels of our hierarchy, cover some literature that refines our hierarchy, and mention an alternative hierarchy based on stochastic transitivity.
\vspace{0.1in}

\noindent\textbf{a. Strict Nesting of Levels}
It is immediate that Level 2 of our hierarchy strictly nests Level 1.  The well-known ``red bus-blue bus" Similarity Effect (Debreu, 1960; Tversky, 1972a, 1972b; McFadden, 1974) establishes that Level 3 strictly nests Level 2.  In Appendix \hyperref[app-b]{B}, we couch this effect in terms of an underlying set $\Omega = \{{\rm red\,\,bus}, {\rm blue\,\,bus}, {\rm taxi}\}$, choice sets ${\cal G} = \{\Omega, \{{\rm blue\,\,bus}, {\rm taxi}\}\}$, and probabilistic mixing over three PCPSs relative to $\cal G$.  This setup violates Conditional Consistency.

Level 4 strictly nests Level 3, as can be shown via the Attraction Effect (Huber, Payne, and Puto, 1982; Simonson, 1989) or Repulsion Effect (Aaker, 1991) in our framework.  The method is to use the fact that Level 3 satisfies the Regularity axiom.  (Expanding a choice set does not increase the probability assigned to any subset of the original choice set.)  The Attraction and Repulsion effects violate Regularity.  In Appendix \hyperref[app-b]{B}, we couch these effects in terms of signed mixtures of PCPSs, which gives a direct demonstration that they lie in Level 4.
\vspace{0.1in}

\noindent\textbf{b. Non-Commutativity}
A natural starting conjecture would be that probabilistic mixing over PCPSs yields the family of CPSs.  Indeed, both PCPSs and CPSs satisfy a chain rule -- the first pointwise and the second probabilistically.  The Similarity Effect (Appendix \hyperref[app-b]{B}) shows that this conjecture is false, but it is worth articulating why this is the case at a conceptual level.

There are two operations involved.  On is probabilistic mixing, which takes deterministic choice rules to stochastic choice rules.  The other is enforcing the chain rule.  Starting from a point stochastic choice rule, enforcing the chain rule produces a PCPS, and probabilistic mixing of PCPSs yields a Level-3 object.  Alternatively, we can first probabilistically mix point stochastic choice rules to obtain a general stochastic choice rule, and then enforce the chain rule, thereby obtaining a CPS.  In general, this is a different answer.  Evidently, the two operations of probabilistic mixing and enforcement of the chain rule do not commute.  We think this non-commutativity is a useful complementary way of understanding how our hierarchy works.
\vspace{0.01in}

\noindent\textbf{c. Related Literature} Our four-level hierarchy for stochastic choice arose organically from the starting point of the PCPS concept.  This said, we do not offer a more fine-grained classification in this paper.  In this direction, one can distinguish between stochastic choice rules that do or do not satisfy Regularity.  A well-known example of a stochastic choice rule  that lies above Block-Marschak conditions but below Regularity is the additive perturbed utility (APU) model due to Fudenberg, Iijima, and Strzalecki (2015).  This model family can encompass non-expected utility preferences arising from implementation costs and ambiguity aversion.  A theory that is the convex dual to APU is the variational preference model of Maccheroni, Marinacci, and Rustichini (2006), interpretable as a game against malevolent nature.  Under restrictions on cost functions, these models all satisfy Regularity.

Lying above Regularity is the deliberately stochastic choice model of Cerreia-Vioglio et al.~(2019), where convex preferences over outcomes induce Regularity violations.  Information processing architectures can also lead to departures from Regularity, as in the leading analysis by Matejka and McKay (2015).  The divisive normalization model of Steverson, Brandenburger, and Glimcher (2019) employs a neuroscience-inspired information cost that likewise yields Regularity violations.  Finally, two-stage models involving choice of consideration set and then choice of alternative can readily accommodate non-Regular behavior (Kashaev and Aguiar, 2022).

Galichon (2022) shows how to write a nested logit model as a mixture of Luce models so that it accommodates the Compromise and Attraction Effects (but not the Attraction Effect in the form that violates Regularity).  Behavioral models such as Gul, Natenzon, and Pesendorfer (2014) and Kovach and Tserenjigmid (2022) introduce additional latent structure -- attributes and focal sets, respectively -- to explain context effects.

A well-known hierarchy for stochastic choice is based on stochastic transitivity, which comes in three forms.  A stochastic choice rule $G \mapsto c_G(\cdot)$ satisfies strong stochastic transitivity (SST), medium stochastic transitivity (MST), or weak stochastic transitivity (WST), if $c_{\{x, y\}}(\{x\}) \geq 1/2$ and $c_{\{y, z\}}(\{y\}) \geq 1/2$ imply:
\begin{equation}~\label{eq95}
c_{\{z, x\}}(\{x\}) \geq
\begin{cases}
& \max \{c_{\{x, y\}}(\{x\}), c_{\{y, z\}}(\{y\})\}, \\
& \min \{c_{\{x, y\}}(\{x\}), c_{\{y, z\}}(\{y\})\}, \\
& \frac{1}{2},
\end{cases}
\end{equation}
respectively.  Rieskamp, Busemeyer, and Mellers (2006) propose an increasing ``rationality" hierarchy going from WST through IIA to SST.  Junnan and Natenzon (2024) use MST to characterize Fechnerian utility functions.  However, even Level 1 of our hierarchy does not satisfy WST.
\vspace{0.1in}

\noindent \textbf{Example 1 Contd.}  \textit{Repeating the previous setup, we find:}
\begin{equation}
p_{\{x, y\}}(\{x\}) = 1 \ge \dfrac{1}{2} \,\, \text{and} \,\, p_{\{y, z\}}(\{y\}) = 1 \ge \dfrac{1}{2}.
\end{equation}
\textit{WST then requires $p_{\{x, z\}}(\{x\}) \ge 1/2$, while, in fact, we have $p_{\{x, z\}}(\{x\}) = 0$.}
\vspace{0.1in}

The stochastic transitivity hierarchy appears to be a distinct organization of stochastic choice rules.
\vspace{0.15in}

\noindent\textbf{d. Open Directions} \label{sec:7d}
A significant question left open is whether or not every CPS can be obtained as $Q$-mixture of PCPSs, where $Q$ is Conditionally Consistent.  We do not know the answer either in the case that $\cal G$ is unrestricted or when $\cal G$ is a nonempty $\pi$-system.  We do know the following partial answer.

\begin{proposition}
Fix a measurable space $(\Omega, \cal F)$ and a sub-family $\cal G$ of nonempty measurable sets.  The family of CPSs ${\cal G} \mapsto p_G(\cdot)$ realizable as a probabilistic mixture of PCPSs coincides with the family of CPSs realizable as Conditionally-Consistent mixtures of PCPSs.
\end{proposition}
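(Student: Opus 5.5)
The plan is to show both inclusions directly, using Theorem~\ref{thm:CC} (equivalently Corollary~\ref{cor-CC}) as the only real ingredient. Conditional Consistency is a property of $Q$ only through the values $Q([E,G]_\Pi)$ on cylinder sets, and these values are pinned down by the realized rule $c$ via Equation~\ref{eq75}. So once $c$ is known to be a CPS, any realizing $Q$ is automatically Conditionally Consistent.

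\emph{Inclusion from Conditionally-Consistent mixtures into probabilistic mixtures.} Let $p$ be a CPS realizable as a Conditionally-Consistent mixture $Q$ of PCPSs. By Definition~\ref{CC}, $Q$ is in particular a finitely additive probability measure on $(\Pi, {\cal D}_\Pi)$ with $p_G(E) = Q([E,G]_\Pi)$ for all $E \in \cal F$ and $G \in \cal G$. Hence $p$ is realizable as a $\Pi$-based probability mixture in the sense of Definition~\ref{pi-realizable}. Nothing further needs to be checked here.

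\emph{Inclusion from probabilistic mixtures into Conditionally-Consistent mixtures.} Let $p$ be a CPS, and suppose some finitely additive probability measure $Q$ on $(\Pi,{\cal D}_\Pi)$ realizes it, so that $p_G(E) = Q([E,G]_\Pi)$. Because $p$ is a CPS, it satisfies the chain rule (Condition~2 of Definition~\ref{CPS}). Theorem~\ref{thm:CC} imposes no closure condition on $\cal G$, so it applies to the stochastic choice rule $p$ realized by $Q$. Its converse direction then gives that $Q$ satisfies Conditional Consistency. Concretely, for $E \subseteq F \subseteq G$ with $F,G \in \cal G$,
\begin{equation*}
Q([E,G]_\Pi) = p_G(E) = p_G(F)\, p_F(E) = Q([F,G]_\Pi)\, Q([E,F]_\Pi).
\end{equation*}
Thus the same $Q$ witnesses that $p$ is a Conditionally-Consistent mixture.

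I do not expect a real obstacle. The one point to state carefully is that Conditional Consistency is a property of the mixing measure rather than of $p$, and could in principle fail for some realizing $Q$. It cannot fail here, because Equation~\ref{eq-CC-product} involves $Q$ only on cylinder sets, where $Q$ is fully determined by $p$. The proof therefore does not need to choose or modify the mixing measure: every realizing $Q$ of a CPS is Conditionally Consistent.
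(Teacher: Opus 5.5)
Your proposal is correct and matches the paper's proof. The paper likewise gets the forward inclusion from the converse direction of Theorem~\ref{thm:CC}, since the chain rule for $p$ transfers to Equation~\ref{eq-CC-product} through the cylinder values of $Q$, and notes that the reverse inclusion is immediate.
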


\begin{proof}
Suppose $p$ is a CPS and is realizable as a probabilistic mixture of PCPSs.  Then $p$ satisfies the chain rule and Theorem~\ref{thm:CC} implies that $p$ is a Conditionally-Consistent mixture.  Conversely, suppose $p$ is realizable as a Conditionally-Consistent mixture of PCPSs.  Then it is certainly realizable as probabilistic mixture of PCPSs.
\end{proof}

Succinctly put, the intersection of the family of CPSs with Level 3 of our hierarchy is Level 2.  What we do not know is if we can drop the proviso in this statement that a CPS lies in Level 3.  Another open direction is the question, already mentioned in Section~\ref{sec:5} of how to relate signed mixtures of PCPSs to choice axioms on an infinite set $\Omega$ of elementary choice objects.  A third direction concerns the relationship between total orders and PCPSs -- which, in Section~\ref{sec:6}, we considered when all choice sets in $\cal G$ are finite.  The examination of this relationship for more general families $\cal G$ would be of interest.

\newpage

\renewcommand{\thesection}{A}
\setcounter{equation}{0}
\renewcommand\theequation{A.\arabic{equation}}
\setcounter{theorem}{0}
\renewcommand\thetheorem{A.\arabic{theorem}}
\setcounter{proposition}{0}
\renewcommand\theproposition{A.\arabic{proposition}}
\setcounter{Ex}{0}
\renewcommand\theEx{A.\arabic{Ex}}

\appendix

\section*{Appendix A: Proof of Proposition~\ref{prop-bm}} \label{app-a} 
We proceed in two stages.  First, we use the standard M\"obius-transform argument of stochastic choice theory (Falmagne, 1978; Barber\'a and Pattanaik, 1986; Rota, 1964) to show that the PCPS-cylinder coefficients defined in Equation~\ref{PCPS-BM} are pointwise nonnegative.  We then invoke PCPS-coherence to transfer this pointwise non-negativity to the corresponding cylinder prices, in order to obtain the Generalized Block-Marschak condition.  For the converse direction, we restrict attention to a finite family of choice sets and use the classical finite Block-Marschak theorem to obtain PCPS-coherence.

Throughout, for $\alpha \in K\in \cal G$, we write:
\begin{equation}
[\alpha, K]_\Pi = [\{\alpha\}, K]_\Pi.
\end{equation}
First suppose that $G \mapsto c_G(\cdot)$ satisfies PCPS-coherence.  For $H \in \cal G$, $\alpha \in H$, and $G \subseteq H \backslash \{\alpha\}$, define:
\begin{equation}\label{PCPS-BM}
q_p(\alpha, G, H) = \sum_{E \subseteq G} (-1)^{|G\backslash E|} \, \mathbf{1}_{[\alpha, H\backslash E]_\Pi}(p).
\end{equation}

We show that $q_p(\alpha, G, H) \geq 0$ for every $p \in \Pi$.  For a PCPS $p \in \Pi$, write $s_p(K)$ for the point selected by $p$ from the finite choice set $K$.  Since $\cal G$ is the finitary sub-family, Proposition~\ref{PCPS-TO-bijection} gives a unique strict total order $\rhd_p$ on $\Omega$ that induces $p$.  Equivalently, for distinct $x, y \in \Omega$, we have $x \rhd_p y$ if and only if $s_p(\{x, y\}) = x$.  In particular, $s_p(K)$ is the $\rhd_p$-maximal element of every $K \in \cal G$.

Define the set of choices in $H$ that block $\alpha$ under $p$ by:
\begin{equation}
B_p(\alpha, H) = \{\beta \in H\backslash\{\alpha\} : \beta \rhd_p \alpha\}.
\end{equation}
For any $E \subseteq G$, we have:
\begin{equation}
\mathbf{1}_{[\alpha, H\backslash E]_\Pi}(p)=1
\end{equation}
if and only if $p$ selects $\alpha$ from $H \backslash E$.  Since $p$ selects the top-ranked element of each choice set, this is equivalent to saying that every choice in $H$ ranked above $\alpha$ has been removed.  Therefore:
\begin{equation}
\mathbf{1}_{[\alpha, H\backslash E]_\Pi}(p) = 1 \,\, \text{if and only if} \,\, B_p(\alpha, H) \subseteq E.
\end{equation}
It follows that:
\begin{equation}
q_p(\alpha, G, H) = \sum_{\substack{E \subseteq G \\ B_p(\alpha, H) \subseteq E}} (-1)^{|G\backslash E|}.
\end{equation}
If $B_p(\alpha, H)\nsubseteq G$, the sum is empty, and therefore $q_p(\alpha, G, H) = 0$.  If $B_p(\alpha,H) \subseteq G$, then, writing $B = B_p(\alpha,H)$:
\begin{equation}
q_p(\alpha, G, H) = \sum_{B\subseteq E\subseteq G} (-1)^{|G\backslash E|} = \sum_{L\subseteq G\backslash B} (-1)^{|(G\backslash B)\backslash L|} = (1 - 1)^{|G\backslash B|}.
\end{equation}

We have now shown:
\begin{equation}
q_p(\alpha, G, H) =
\begin{cases}
1, & \text{if} \,\, B_p(\alpha, H) = G, \\
0, & \text{otherwise}.
\end{cases}
\end{equation}
In particular, we have $q_p(\alpha, G, H) \ge 0$ for every $p \in \Pi$.  Equivalently, the following pointwise inequality holds on $\Pi$:
\begin{equation}
\sum_{\substack{E \subseteq G \\ |G\backslash E| \, \text{even}}}
\mathbf{1}_{[\alpha, H\backslash E]_\Pi} \ge \sum_{\substack{E \subseteq G \\ |G\backslash E| \, \text{odd}}} \mathbf{1}_{[\alpha, H\backslash E]_\Pi}.
\end{equation}
By PCPS-coherence, the same inequality holds after replacing each cylinder indicator by its price under $c$, that is:
\begin{equation}
\sum_{\substack{E \subseteq G \\ |G\backslash E| \, \text{even}}}
c_{H\backslash E}(\{\alpha\}) \ge \sum_{\substack{E\ \subseteq G \\ |G\backslash E| \, \text{odd}}} c_{H\backslash E}(\{\alpha\}),
\end{equation}
which says that $q(\alpha, G, H)\ge 0$.  This establishes that PCPS-coherence implies the Generalized Block-Marschak condition.

For the converse direction, suppose that  ${\cal G} \mapsto c_G(\cdot)$ satisfies the Generalized Block Marschak condition.  To establish PCSP-coherence, consider finite collections of cylinder sets:
\begin{align}
&[E_1, G_1]_\Pi, \ldots, [E_m, G_m]_\Pi, \\
&[F_1, H_1]_\Pi, \ldots, [F_n,H_n]_\Pi,
\end{align}
with:
\begin{equation}
\sum_{i=1}^m \mathbf{1}_{[E_i, G_i]_\Pi}(p) \le \sum_{j=1}^n \textbf{1}_{[F_j, H_j]_\Pi}(p),
\end{equation}
for every $p \in \Pi$. Let:
\begin{equation}
S= (\bigcup_{i=1}^m G_i) \cup (\bigcup_{j=1}^n H_j).
\end{equation}
Since only finitely many choice sets are involved, the set $S$ is finite.  Let $c^S$ denote the restriction of $c$ to the nonempty subsets of $S$.  The Generalized Block-Marschak condition for $c$ restricts to the ordinary Block-Marschak condition on this finite domain.  By the classical Block-Marschak theorem (Falmagne, 1978), there is a probability measure $Q^S$ on the set $\Lambda^S$ of strict total orders on $S$ such that for every $L \subseteq K \subseteq S$ with $K \not= \emptyset$:
\begin{equation} \label{eq-Q-s}
c_K(L) = Q^S\bigl(\{\rhd \in \Lambda^S : \max_\rhd K \in L\}\bigr).
\end{equation}

Fix $\rhd \in \Lambda^S$.  By the Szpilrajn order-extension theorem (Szpilrajn, 1930), we can extend $\rhd$ to a strict total order $\rhd^*$ on $\Omega$.  Let $p^{\rhd^*}$ be the PCPS induced by $\rhd^*$.  By assumption:
\begin{equation}
\sum_{i=1}^m \mathbf{1}_{[E_i, G_i]_\Pi}(p^{\rhd^*}) \le \sum_{j=1}^n \textbf{1}_{[F_j, H_j]_\Pi}(p^{\rhd^*}).
\end{equation}
Since all sets $G_i, H_j$ are contained in $S$, the values of these indicators depend only on the restriction of $\rhd^*$ to S, which is $\rhd$.  Therefore, for every $\rhd \in \Lambda^S$:
\begin{equation} \label{eq-new-max}
\sum_{i=1}^m \mathbf{1}_{\{\max_\rhd G_i \in E_i\}} \le \sum_{j=1}^n \textbf{1}_{\{\max_\rhd H_j \in F_j\}}.
\end{equation}
Integrating Equation~\ref{eq-new-max} under $Q^S$ and using Equation~\ref{eq-Q-s} yields:
\begin{align}
\sum_{i=1}^m c_{G_i}(E_i) &= \sum_{i=1}^m Q^S\bigl(\{\rhd \in \Lambda^S : \max_\rhd G_i \in E_i\}\bigr) \\
&\le \sum_{j=1}^n Q^S\bigl(\{\rhd \in \Lambda^S : \max_\rhd H_j \in F_j\}\bigr) \\
&= \sum_{j=1}^n c_{H_j}(F_j).
\end{align}
This establishes that $c$ satisfies PCPS-coherence, completing the converse direction.  Proposition~\ref{prop-bm} is now proved.

\renewcommand{\thesection}{B}
\setcounter{equation}{0}
\renewcommand\theequation{B.\arabic{equation}}
\setcounter{theorem}{0}
\renewcommand\thetheorem{B.\arabic{theorem}}
\setcounter{proposition}{0}
\renewcommand\theproposition{B.\arabic{proposition}}
\setcounter{Ex}{0}
\renewcommand\theEx{B.\arabic{Ex}}

\section*{Appendix B: Strict Nesting of Levels} \label{app-b} 
The Similarity Effect (Debreu, 1960; Tversky, 1972a, 1972b; McFadden, 1974) establishes that Level 3 strictly nests Level 2 of our conditional choice hierarchy.  Here, we couch this effect in terms of probabilistic mixtures of PCPSs.

\begin{Ex}
Let $\Omega = \{{\rm red\,\,bus}, {\rm blue\,\,bus}, {\rm taxi}\}$.  The decision maker (DM) is indifferent between taking a bus or a taxi and flips a coin to decide between these two modes of travel.  Within the bus category, the DM does not care about the color of the bus and therefore flips a coin to decide on red vs.~blue if both choices are available.  Set ${\cal G} = \{\Omega, \{{\rm blue\,\,bus}, {\rm taxi}\}\}$ and consider the following three PCPSs $p^{\rm r}$, $p^{\rm b}$, $p^{\rm t}$:
\begin{align}
p^r_{\Omega}(\{{\rm red\,\,bus}\}) = 1,&\,\, p^r_{\{{\rm blue\,\,bus}, {\rm taxi}\}}(\{{\rm blue\,\,bus}\}) = 1, ~\label{eq44}\\
p^b_{\Omega}(\{{\rm blue\,\,bus}\}) = 1,&\,\, p^b_{\{{\rm blue\,\,bus}, {\rm taxi}\}}(\{{\rm blue\,\,bus}\}) = 1, ~\label{eq45}\\
p^t_{\Omega}(\{{\rm taxi}\}) = 1,&\,\, p^t_{\{{\rm blue\,\,bus}, {\rm taxi}\}}(\{{\rm taxi}\}) = 1. ~\label{eq46}
\end{align}

The DM puts probability $1/4$ on each of the first and second PCPSs $p^r$ and $p^b$, and probability $1/2$ on the third PCPS $p^t$.  If the choice set is $\Omega$, then the DM averages over the PCPSs to arrive at probability $1/2$ on choosing a bus and probability $1/2$ on choosing a taxi.  If the choice set is $\{{\rm blue\,\,bus}, {\rm taxi}\}$, then the DM averages to get again probability $1/2$ on a bus and probability $1/2$ on a taxi.  This is the intuitive set of choice probabilities.  But, in the first case, the probability ratio of blue bus to taxi is $1:2$, while in the second case it is $1:1$.  This violates (G)IIA.  Since $\cal G$ is a nonempty $\pi$-system, Theorem~\ref{thm-level-2} and Corollary~\ref{cor-CC} tell us that Conditional Consistency is violated.
\end{Ex}

As an aside, we observe that the Compromise Effect (Simonson, 1989; Simonson and Tversky, 1992) can be represented in exactly the same manner, even though it is often described quite differently in behavioral terms.

\begin{Ex}
Let $\Omega = \{\rm{l\text{-}camera}, \rm{m\text{-}camera}, \rm{h\text{-}camera}\}$, where $\rm{l\text{-}camera}$ is the low-quality/low-price option, $\rm{m\text{-}camera}$ is the medium-quality/medium-price option, and $\rm{h\text{-}camera}$ is the high-quality/high-price option.  Set ${\cal G} = \{\{\rm{l\text{-}camera}, \rm{m\text{-}camera}\}$, $\Omega\}$.  Faced with the choice set $\{\rm{l\text{-}camera}, \rm{m\text{-}camera}\}$, the DM selects each camera with equal probability.  Faced with the choice set $\Omega$, the DM selects each of the $\rm{l\text{-}camera}$ and $\rm{h\text{-}camera}$ with probability $1/4$, and selects the $\rm{m\text{-}camera}$ with probability $1/2$.  This again is a violation of (G)IIA.  The usual story is that the addition of the $\rm{h\text{-}camera}$ to the choice set emphasizes the inferiority of the $\rm{l\text{-}camera}$, which in turn makes the $\rm{m\text{-}camera}$ stand out as a good compromise between the low quality of the $\rm{l\text{-}camera}$ and the high price of the $\rm{h\text{-}camera}$h-camera.

This scenario can be represented via a probabilistic mixture of PCPSs: 
\begin{align}
p_{\Omega}^l(\{\rm{l\text{-}camera}\}) = 1,& \,\, p_{\{\rm{l\text{-}camera, m\text{-}camera}\}}^l(\{\rm{l\text{-}camera}\}) = 1, ~\label{eq47} \\
p_{\Omega}^h(\{\rm{h\text{-}camera}\}) = 1,& \,\, p_{\{\rm{l\text{-}camera, m\text{-}camera}\}}^h(\{\rm{l\text{-}camera}\}) = 1, ~\label{eq48} \\
p_{\Omega}^m(\{\rm{m\text{-}camera}\}) = 1,& \,\, p_{\{\rm{l\text{-}camera, m\text{-}camera}\}}^m(\{\rm{m\text{-}camera}\}) = 1. ~\label{eq49}
\end{align}
The DM puts probability $1/4$ on each of the first and second PCPSs $p^{l}$ and $p^{h}$, and probability $1/2$ on the third PCPS $p^{m}$.  The key here is that when the choice set expands to include the $\rm{h\text{-}camera}$, then, with probability $1/4$, the DM will switch from the $\rm{l\text{-}camera}$ to the $\rm{h\text{-}camera}$, reflecting the inferiority of the former.
\end{Ex}

Next, the Attraction Effect (Huber, Payne, and Puto, 1982; Simonson, 1989) and the Repulsion Effect (Aaker, 1991) both violate Regularity.  This establishes that Level 4 strictly nests Level 3 of our hierarchy, since the PCPS-implies Regularity.

\begin{proposition} \label{thm-bm-reg}
Fix a set $\Omega$ and let $\cal G$ be the finitary family of subsets of $\Omega$.  If a stochastic choice rule $G \mapsto c_G(\cdot)$ satisfies PCPS-coherence, then it satisfies Regularity.
\end{proposition}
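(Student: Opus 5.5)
Regularity says that for $G \subseteq H$ in $\cal G$ and $E \in \cal F$ with $E \subseteq G$, we have $c_H(E) \le c_G(E)$. The plan is to exhibit this as a no Dutch book inequality between two single cylinder bets, so that PCPS-coherence gives it at once. Concretely, we take $m = n = 1$ in Definition~\ref{charge}, with $C_1 = [E, H]_\Pi$ and $D_1 = [E, G]_\Pi$. The whole task then reduces to the pointwise inclusion
\begin{equation*}
[E, H]_\Pi \subseteq [E, G]_\Pi ,
\end{equation*}
that is, $\mathbf{1}_{[E,H]_\Pi} \le \mathbf{1}_{[E,G]_\Pi}$ on $\Pi$.

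To prove the inclusion, fix $p \in \Pi$ with selection $s$, and suppose $p_H(E) = 1$, so $s(H) \in E$. Since $E \subseteq G \subseteq H$, we have $s(H) \in G$. Proposition~\ref{prop-chernoff} (the Chernoff property) then gives $s(G) = s(H)$, and hence $s(G) \in E$, i.e. $p_G(E) = 1$. Alternatively, the chain rule gives this directly:
\begin{equation*}
p_H(E) = p_H(G) \times p_G(E),
\end{equation*}
and so $p_H(E) = 1$ forces $p_G(E) = 1$.

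With the inclusion in hand, PCPS-coherence via Equation~\ref{eq-assignment} yields $c_H(E) = \mu([E,H]_\Pi) \le \mu([E,G]_\Pi) = c_G(E)$. This is exactly Regularity.

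No step is a real obstacle. The only care needed is in stating Regularity in the form used here: subsets $E \subseteq G$ with $E \in \cal F$, where $\cal F$ is the field generated by the finitary family. The finitary assumption is in fact not needed, since the argument works for any $\cal G$. As an alternative route, one could invoke Theorem~\ref{thm-level-3} to write $c$ as a $Q$-mixture and integrate the same indicator inequality. The direct partial-charge argument is shorter, so I would use it.
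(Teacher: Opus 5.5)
Your proof is correct and rests on the same key step as the paper's: the cylinder inclusion $[\alpha,H]_\Pi \subseteq [\alpha,G]_\Pi$, obtained from the chain rule $p_H(\{\alpha\}) = p_H(G) \times p_G(\{\alpha\})$. The paper then invokes Theorem~\ref{thm-level-3} to get a mixing measure $Q$ and uses its monotonicity, whereas you read the inequality directly off the partial-charge definition with $m=n=1$; this bypasses the extension theorem and, as you note, covers arbitrary $E \subseteq G$ and arbitrary ${\cal G}$.
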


\begin{proof}
Consider $G, H \in {\cal G}$ with $G \subseteq H$, and a stochastic choice rule $G \mapsto c_G(\cdot)$ satisfying PCPS-coherence.  We want to show that $c_H(\{\alpha\}) \leq c_G(\{\alpha\})$.  Fix a PCPS $G \mapsto p_G(\cdot)$ and some $\alpha \in G$.  The chain rule gives $p_H(\{\alpha\}) = p_H(G) \times p_G(\{\alpha\})$.  It follows that if $p_H(\{\alpha\}) = 1$, then both terms on the right side must be $1$.  In particular, we infer that $p_G(\{\alpha\}) = 1$.  From this:
\begin{equation}~\label{77}
[\alpha, H]_\Pi = \{ p \in \Pi : p_H(\{\alpha\}) = 1 \} \subseteq \{ p \in \Pi : p_G(\{\alpha\}) = 1 \} = [\alpha, G]_\Pi.
\end{equation}
Using Theorem~\ref{thm-level-3}, we write $c_G(\alpha) = Q([\alpha, G]_\Pi)$ and $c_H(\alpha) = Q([\alpha, H]_\Pi)$.  Monotonicity of $Q$ yields $c_H(\alpha) \leq c_G(\alpha)$, as required.
\end{proof}

We now couch the Attraction and Repulsion Effects in terms of signed mixtures of PCPSs.

\begin{Ex} \label{ex-b3}
Let $\Omega = \{x, y, z\}\}$, ${\cal G} = \{\Omega, \{x, y\}\}$, and consider a stochastic choice rule satisfying:
\begin{equation} \label{eq83}
c_{\{x, y\}}(\{x\}) < c_\Omega(\{x\}),
\end{equation}
which is a violation of Regularity.  In one scenario for the Attraction Effect (Simonson and Tversky, 1992), item $x$ is a nice pen, item $y$ is a certain sum of money, and item $z$ is a plain pen.  The addition of item $z$ highlights the attractiveness of item $x$, which is then chosen with higher probability.  In a scenario for the Repulsion Effect (Simonson, 2014, Kruis et al., 2020), item $x$ is candy, item $y$ is an orange, and item $z$ is a spoiled clementine.  The addition of item $z$ casts doubt on the freshness of item $y$, so that, again, item $x$ is chosen with higher probability.

We now show how to produce the Attraction and Repulsion Effects via a signed mixture of PCPSs.  Specifically, consider four PCPSs relative to $\cal G$:
\begin{align}
p^1_{\{x,y\}}(\{x\}) = 1,& \,\, p^1_{\{x,y,z\}}(\{x\}) = 1, ~\label{eq89} \\
p^2_{\{x,y\}}(\{y\}) = 1,& \,\, p^2_{\{x,y,z\}}(\{y\}) = 1, ~\label{eq90} \\
p^3_{\{x,y\}}(\{x\}) = 1,& \,\, p^3_{\{x,y,z\}}(\{z\}) = 1, ~\label{eq91}\\
p^4_{\{x,y\}}(\{y\}) = 1,& \,\, p^4_{\{x,y,z\}}(\{z\}) = 1. ~\label{eq92}
\end{align}
We put a signed probability measure $(q^1, q^2, q^3, q^4)$, where each $q^i \in \Re$ and $\sum_{i=1}^4q_i = 1$, on these four PCPSs and obtain:
\begin{align}
c_{\{x,y\}}(\{x\}) &= q^1 + q^3, ~\label{eq93} \\
c_{\{x,y,z\}}(\{x\}) &= q^1. ~\label{eq94}
\end{align}

In order to create a violation of Regularity, we must have $q^3 < 0$.  This makes intuitive sense.  The PCPS $p^3$ selects item $x$ -- the nice pen or the candy -- from the choice set $\{x, y\}$.  But it selects item $z$ -- the plain pen or spoiled clementine -- from the larger set $\{x, y, z\}$.  We expect the DM to want to avoid choosing according to this PCPS.  (We do not rule out that $q^4 < 0$ as well.)

Our analysis here is similar to the treatment of the Attraction Effect by Dogan and Yildiz (2023, Example 2) in terms of orders.  A story to go with the example is that the DM is a principal and there are four agents, each with one of the four PCPSs above.  (This might be a multiple-selves story.)  The principal has preferences over which agent gets to make choices and dislikes the event that agent 3 -- with PCPS $p^3$ -- is the one to choose.  Formally, the DM has a negative willingness-to-bet on this event, which is an interpretation of a negative subjective probability from Brandenburger et al., 2026).
\end{Ex}

\section*{Declaration of Generative AI and AI-Assisted Technologies in the Writing Process}

During the preparation of this work the authors used ChatGPT for proof tactics and checking, copy-editing, and bibliographic search.  After using this tool, the authors reviewed and edited the content as needed and take full responsibility for the content of the published article.

\end{document}